\theoremstyle{plain}
\newtheorem{thm}{\protect\theoremname}[section]
\theoremstyle{plain}
\newtheorem{lem}[thm]{\protect\lemmaname}
\theoremstyle{plain}
\newtheorem{prop}[thm]{\protect\propositionname}
\date{}
\definecolor{rot}{rgb}{1.000,0.000,0.000}
\newcommand{\N}{\mathbb{N}}
\newcommand{\R}{\mathbb{R}}
\newcommand{\e}{\mathrm{e}}
\newcommand{\dom}{\mathrm{dom}}
\renewcommand{\d}{\mathrm{d}}
\newcommand{\cA}{\mathcal{A}}
\newcommand{\cB}{\mathcal{B}}
\newcommand{\cC}{\mathcal{C}}
\newcommand{\cT}{\mathcal{T}}
\newcommand{\cS}{\mathcal{S}}
\newcommand{\cI}{\mathcal{I}}
\newcommand{\oA}{\mathbf{A}}
\newcommand{\oB}{\mathbf{B}}
\newcommand{\oI}{\mathbf{I}}
\newcommand{\oX}{\mathbf{X}}
\newcommand{\oC}{\mathbf{C}}
\newcommand{\oD}{\mathbf{D}}
\newcommand{\oE}{\mathbf{E}}
\newcommand{\oF}{\mathbf{F}}
\newcommand{\oT}{\mathbf{T}}
\definecolor{rot}{rgb}{1.000,0.000,0.000}
\providecommand{\lemmaname}{Lemma}
\providecommand{\propositionname}{Proposition}
\providecommand{\theoremname}{Theorem}
\begin{document}
\title{Trotter-type formula for operator semigroups\\ on product spaces}
\author{Artur Stephan\thanks{Weierstraß-Institut für Angewandte Analysis und Stochastik, Mohrenstraße 39, 10117 Berlin, Germany, e-mail: \tt{artur.stephan@wias-berlin.de}}}
\date{June 30, 2023}
\maketitle

\lhead{Trotter on product spaces}

\chead{June 30, 2023}

\rhead{Artur Stephan}
\begin{abstract}
We consider a Trotter-type-product formula for approximating the solution
of a linear abstract Cauchy problem (given by a strongly continuous semigroup), where the underlying Banach space is a product of two
spaces. In contrast to the classical Trotter-product formula, the
approximation is given by freezing subsequently the components of
each subspace. After deriving necessary stability estimates for the
approximation, which immediately provide convergence in the natural
strong topology, we investigate convergence in the operator norm.
The main result shows that an almost optimal convergence rate can be
established if the dominant operator generates a holomorphic semigroup
and the off-diagonal coupling operators are bounded.

\begin{comment}
\begin{abstract}
\vspace{0.3cm}

Keywords: Strongly continuous semigroups of linear operators, split-step
method, Trotter-product formula, time discretization, product space,
tensor space, block operator matrices, operator-norm convergence rate
estimate, inhomogeneous abstract Cauchy problems.

\vspace{0.3cm}

MSC2020: 47D06, 15A60.
\end{abstract}
\end{comment}
\end{abstract}

\section{Introduction}

The classical and rich theory of strongly continuous semigroups $\left\{ \oT(t)\right\} {}_{t\geq0}$
provides a tool for solving linear abstract Cauchy problems $\dot{u}(t)=-\oC u(t)$
for $t\geq0$, $u(0)=x$ in a Banach space $X$ \cite{Pazy83SLOA,Kato95PTLO,EngNag00OPSLEE}:
the linear operator $-\oC:\dom(\oC)\subset X\to X$ is a generator
of a strongly continuous semigroup (denoted by $\e^{-t\oC}$) if and
only if for every $x\in\dom(\oC)$ there exists a unique solution
$u(\cdot,x)\in\dom(\oC)$ of the abstract Cauchy problem, which is
given by $\e^{-t\oC}x=u(t,x)$ (we recall basics from semigroup theory
in Section \ref{subsec:Semigroup-theory}). Often the operator $\oC$
is given by a sum $\oC=\oA+\oB$. If $-\oA$ is a generator, classical
perturbation results (see e.g. \cite[Chapter III]{EngNag00OPSLEE})
provide information when $-\oC$ is again a generator. Moreover, assuming
that $-\oA$ and $-\oB$ are generators of semigroups $\e^{-t\oA}$
and $\e^{-t\oB}$, respectively, then the solution operator $\e^{-t\oC}$
can be approximated by the so-called Trotter-product formula
\[
\lim_{n\to\infty}\left(\e^{-\tfrac{t}{n}\oA}\e^{-\tfrac{t}{n}\oB}\right)^{n}\to\e^{-t\oC},
\]
provided a stability condition holds for the product (see e.g. \cite[Corollary III.5.8]{EngNag00OPSLEE}).
The convergence is an immediate consequence of the more general Chernoff-product
formula \cite{Cher74PFNSAUO} and is meant in the natural topology
in the theory of semigroups, the strong topology. Apart from its theoretical
value, the Trotter-product formula is important in applications as
it provides a way to approximate the (in general) complicated solution
$\e^{-t\oC}$ by subsequently applying the simpler parts $\e^{-t/n\oA}$
and $\e^{-t/n\oB}$ together $n$-times, and thus defining a numerical
split-step method.

In that paper, we are interested in the case where the underlying
Banach space $X$ is given by a product $X=X_{1}\times X_{2}$. The
operator of interest consists of two parts and is given in matrix
form
\[
-\cC=-\cA+\cB=\begin{pmatrix}-\oA_{1} & \oB_{1}\\
\oB_{2} & -\oA_{2}
\end{pmatrix},\quad\cA=\begin{pmatrix}\oA_{1} & \cdot\\
\cdot & \oA_{2}
\end{pmatrix},\quad\cB=\begin{pmatrix}\cdot & \oB_{1}\\
\oB_{2} & \cdot
\end{pmatrix},
\]
where $-\oA_{j}:\dom(\oA_{j})\subset X_{j}\to X_{j}$ are generators
of a semigroup, and $\oB_{1}:X_{2}\to\oX_{1}$, $\oB_{2}:X_{1}\to X_{2}$
are linear operators that describe the coupling. These matrix operators
occur frequently in physical problems: e.g. in continuum mechanics,
where $X_{1}$ contains the states in a bulk and $X_{2}$ contains
the states in a reservoir, the operators $\oA_{j}$ describe their
dynamics and $\oB_{j}$ the coupling; or in quantum mechanics, where
$X$ is the state space of two combined quantum systems, each individual
time-dependent system is described by $\oA_{j}$, the interaction
by $\oB_{1}$ and $\oB_{2}$.

We assume that both coupling operators $\oB_{j}$ are bounded (although
we discuss the unbounded case in Section \ref{sec:UnboundedCoupling}).
The operator $-\cA:\dom(\cA)=\dom(\oA_{1})\times\dom(\oA_{2})\to X$
is a generator of a semigroup on $X$, and its semigroup is given
by the diagonal matrix of the individual semigroups $\e^{-t\oA_{j}}$.
Moreover, $\cB$ is a bounded perturbation, and hence, $-\cC:\dom(\cC)=\dom(\cA)\to X$
is a generator, too. Its semigroup $\e^{-t\cC}$ defines the solution
$u=u(t)=(u_{1}(t),u_{2}(t))^{\mathrm{T}}$ of the abstract Cauchy
problem $\dot{u}(t)=-\cC u(t),\,u(0)=(x,y)^{\mathrm{T}}$ on $X$.
As an immediate consequence one can show that (under reasonable assumptions)
the solution operator of the combined system $\e^{-t\cC}$ can be
approximated by the Trotter-product formula $\left(\e^{-\tfrac{t}{n}\cA}\e^{\tfrac{t}{n}\cB}\right)^{n}$.
However, from the practical standpoint the Trotter-product formula
is often not useful because the semigroup of $\e^{t\cB}$ cannot be
expressed explicitly by the individual operators $\oB_{j}$. Moreover,
in each step an expensive evolution on the whole space $X=X_{1}\times X_{2}$
has to be calculated.

This problem can be solved by replacing $\e^{-\tfrac{t}{n}\cA}\e^{\tfrac{t}{n}\cB}$
by an split-step approximation that respects the underlying product
structure. The idea is to define a family of bounded operators $\cT=\cT(t)$
on $X$, consisting of two parts $\cT(t)=\cT_{2}(t)\cT_{1}(t)$ ,
where each bounded operator trajectory $\cT_{j}=\cT_{j}(t)$ defines
the solution of the abstract Cauchy problem $\dot{u}(t)=-\cC u(t),\,u(0)=(x,y)^{\mathrm{T}}$,
with one freezed (constant in time) component. These subsequent abstract
Cauchy problems become inhomogeneous and can be solved explicitly
(see Section \ref{subsec:Inhomogeneous-abstract-Cauchy}). The split-step
approximation operator is then given by
\begin{equation}
\cT(t)=\begin{pmatrix}\e^{-t\oA_{1}} & \int_{0}^{t}\e^{-s\oA_{1}}\oB_{1}\d s\\
\int_{0}^{t}\e^{-s\oA_{2}}\oB_{2}\d s\,\e^{-t\oA_{1}} & \int_{0}^{t}\e^{-s\oA_{1}}\oB_{1}\d s\,\int_{0}^{t}\e^{-s\oA_{2}}\oB_{2}\d s+\e^{-t\oA_{2}}
\end{pmatrix},\tag{AO}\label{eq:Approximation}
\end{equation}
which is the main object of investigation in the paper, see Section
\ref{sec:Split-step-method} for more details. Operator-matrix semigroups
have attracted a lot of attention in the last decades: in spectral
analysis \cite{Arli02SBOM,Tret08STBOM}; in modeling and solving various
types of evolution equations, see e.g. \cite{EngeOMSEE,BatPia05SDE,LiHuCh20SGUBOM,AgrHus21MRHCBOM};
and in the context of split-step methods \cite{CsoNic08OSDE,BCEF12SCPFOM,BCEF14SLTPOMS}.
For a recent split-step convergence analysis in the context of nonlinear
gradient-flow PDEs we refer to \cite{MiRoSt23TSMGF}.

Here, under the assumption that the coupling operators $\oB_{j}$
are bounded, a straight-forward computation shows that $\cT$ is stable,
i.e. for all $n\geq0$ and $t\geq0$, $\cT(t/n)^{n}$ can be bounded.
Stability provides that $\cT(t/n)^{n}$ is a well-defined approximation,
and, in particular, we have the strong-convergence result $\cT(t/n)^{n}x\to\e^{-t\cC}x$
for all $x\in X$ as $n\to\infty$, see Proposition \ref{prop:StabilityStrongConvergence}.
The main result (Theorem \ref{thm:MainConvergenceResultHolomorphicSemigroups})
is that, assuming that the semigroups of $-\oA_{1}$ and $-\oA_{2}$
are holomorphic, the convergence of the approximation can be improved
to operator-norm convergence, and moreover, can be estimated by $O(\log(n)/n)$,
which is almost the optimal convergence rate of $O(1/n)$. Operator-norm
convergence for approximations of Trotter-product form have been derived
first in \cite{Roga93EBTTFSAO} for semigroups of self-adjoint operators
in Hilbert spaces, and later generalized by \cite{CacZag01ONCTPFHS}
to holomorphic semigroups on Banach spaces. Here, we also assume that
the semigroups are holomorphic. However, the result here shows operator-norm
convergence for the approximation operators $\cT$ where its components
are not given by semigroups (although $\oB_{j}$ are bounded they
are not generators as they even act between different spaces). The
crucial idea is to estimate $\cT(t)-\e^{-t\cC}$ for small $t\geq0$
by evaluating their derivatives, and to show that this $O(t^{2})$
behavior propagates to the whole approximation. Technical difficulties
arise by the additional non-commutative feature of matrix multiplication.

The practical component of operator-norm convergence, in contrast
to convergence in the strong topology, is evident: The solution can
be approximated regardless the initial condition (and its generic
uncertainty); moreover, a convergence-rate estimate provides a universal
bound how good the approximation actually is. Moreover, the convergence
proof is flexible to provide also convergence for different (but similar)
approximation (see Section \ref{subsec:Other-approximations}).

The crucial assumption for estimating the convergence rate is the
holomorphicity of the dominating semigroups. In \cite{NeStZa18RONCTPF},
a counterexample has been constructed showing that operator-norm convergence
does not hold if the semigroup of the main operator is not holomorphic.
There, the construction is done by a time-dependent perturbation.
Affirmative convergence result for time-dependent perturbations can
be found in \cite{NeStZa17CREASO,NeStZa18ONCTPF,NeStZa19TPFLEEHS,NeStZa20CRETPA}.
In principle, time-dependent couplings $t\mapsto\oB_{j}(t)$ can also
be considered for approximations \eqref{eq:Approximation}. The corresponding
operator-norm convergence result however is left for future work.

\begin{comment}
Finally, in Section \ref{sec:UnboundedCoupling}, we comment on the
situation, where the coupling operators $\oB_{j}$ are not bounded.
The split-step approximation operator family $\cT$ is well-defined,
however, the essential ingredient of stability of $\cT(t/n)^{n}$
is unclear in general.
\end{comment}

\section{Preliminaries}

In this section, $\left(X,\|\cdot\|\right)$ is a general Banach space;
moreover, all operators in the paper are linear. We first recall well-known
important facts from semigroup theory and (inhomogeneous) abstract
Cauchy problems, see e.g. \cite{EngNag00OPSLEE}

\subsection{Recap of semigroup theory\label{subsec:Semigroup-theory}}

A family $\left\{ \oT(t)\right\} _{t\geq0}$ of bounded linear operators
on the Banach space $X$ is called a \textit{strongly continuous semigroup}
(in the following only \textit{semigroup}) if it satisfies the functional
equation 
\[
\cT(0)=\oI,\quad\oT(t+s)=\oT(t)\oT(s),\quad\text{for }t,s\geq0,
\]
and, moreover, the orbit maps $[0,\infty[\ni t\mapsto\oT(t)x$ are
continuous for all $x\in X$. In the following, the identity map is
denoted by $\oI:X\to X$. For a given semigroup its generator is a
linear operator defined by the limit 
\[
-\oA x:=\lim_{t\to0}\frac{1}{t}\left(\oT(t)x-x\right)
\]
on the domain $\dom(\oA)=\left\{ x\in X:\lim_{t\to0}\frac{1}{t}\left(\oT(t)x-x\right)\text{ exists}\right\} $.

It is well-known that the generator $-\oA$ of a strongly continuous
semigroup is a closed and densely defined linear operator, which uniquely
determines its semigroup, which we will denote by $\left\{ \oT(t)=\e^{-t\oA}\right\} _{t\geq0}$.
Recall that for a semigroup $\left\{ \oT(t)=\e^{-t\oA}\right\} _{t\geq0}$,
there are constants $M,\beta$ such that $\|\oT(t)\|\leq M\e^{\beta t}$
for all $t\geq0$. The operator norm for a bounded operator $\oB:X\to Y$
is defined as usual by $\|\oB\|:=\sup\left\{ \|\oB x\|_{Y}:\|x\|_{X}\leq1\right\} $,
which is a norm in the space of bounded linear operators. If $\beta\leq0$
then the semigroup is called \textit{bounded}; if $\|\oT(t)\|\leq1$,
the semigroup is called a \textit{contraction semigroup.}

The semigroup $\left\{ \oT(t)=\e^{-t\oA}\right\} _{t\geq0}$ is called
a \textit{bounded holomorphic semigroup} if its generator $-\oA$
satisfies $\oT(t)x\in\dom(\oA)$ for all $x\in X$ and $t>0$, and
if there is a constant $M_{A}>0$ such that $\sup_{t>0}\|t\oA\oT(t)\|\leq M_{A}$.
Recall that in this case the bounded semigroup $\left\{ \oT(t)\right\} _{t\geq0}$
has a unique analytic continuation into the open sector $\left\{ z\in\mathbb{C}\setminus\left\{ 0\right\} :|\mathrm{arg}(z)|<\delta(\oA)\leq\pi/2\right\} \subset\mathbb{C}$
of an angle $\delta(\oA)>0$.

In the following, we are interested in sums of operators given by
a generator $-\oA$ with semigroup $\|\e^{-t\oA}\|\leq M\e^{\beta t}$
and a bounded operator $\oB$. It is well-known that $-\oC:=-\oA+\oB$,
$\dom(\oC)=\dom(\oA)$ is a generator of a semigroup $\left\{ \e^{-t\oC}\right\} _{t\geq0}$
that satisfies $\|\e^{-t\oC}\|\leq M\e^{(\beta+M\|\oB\|)t}$ for all
$t\geq0$ (see \cite[Theorem III.1.3]{EngNag00OPSLEE}). Moreover,
if the semigroup $\left\{ \e^{-t\oA}\right\} _{t\geq0}$ is a holomorphic
semigroup, then so is $\left\{ \e^{-t\oC}\right\} _{t\geq0}$ (see
Proposition III.1.12).

We recall the following facts, which will be important for further
estimates.
\begin{lem}[Engel-Nagel, Lemma II.1.3]
\label{lem:PropertiesSemigroups}Let $-\oA$ be a generator of a
bounded semigroup $\left\{ \e^{-t\oA}\right\} _{t\geq0}$. Define
for $x\in X$ and $t\geq0$ the operator 
\[
\oF_{t}x:=\int_{0}^{t}\e^{-s\oA}x\d s.
\]
Then, we have
\begin{enumerate}
\item $\oF_{t}:X\to X$ is bounded with $\|\oF_{t}\|\leq tM$, and for all
$x\in X$ we have $\frac{1}{t}\oF_{t}x\to x$ as $t\to0$.
\item For all $t\geq0$ and $x\in X$, we have $\oF_{t}x\in\dom(\oA)$,
and $\e^{-t\oA}x-x=-\oA\oF_{t}x$. In particular, we have $\|\oA\oF_{t}\|\leq M+1$.
\item If, in addition, $\oA:\dom(\oA)\subset X\to X$ is boundedly invertible,
then $\oF_{t}x=\oA^{-1}\left(\oI-\e^{-t\oA}\right)x.$
\end{enumerate}
\end{lem}

\subsection{Inhomogeneous abstract Cauchy problems\label{subsec:Inhomogeneous-abstract-Cauchy}}

The following split-step method is based by solving inhomogeneous
abstract Cauchy problems, which are in general of the form
\begin{equation}
\dot{u}(t)=-\oA u(t)+f(t)\,,\quad u(0)=u_{0}\in X,\tag{iaCP}\label{eq:iACP}
\end{equation}
where $-\oA:\dom(A)\to X$ is a generator of a strongly continuous
semigroup, and $f:[0,\infty[\to X$ is an inhomogeneity. The formal
solution is then given by the variation of constants formula and has
the form
\[
u(t)=\e^{-t\oA}u_{0}+\int_{0}^{t}\e^{-(t-s)\oA}f(s)\d s.
\]
At least formally, one easily sees that 
\[
\dot{u}(t)=-\oA\e^{-t\oA}u_{0}-\oA\int_{0}^{t}\e^{-(t-s)\oA}f(s)\d s+f(t)=-\oA u(t)+f(t)\,.
\]
There are well-known criteria on how temporal as well as spatial regularity
of $f$ determine regularity of the solution $u$ of the inhomogeneous
Abstract Cauchy problem \eqref{eq:iACP}, see e.g. \cite[Chapter VI.7]{EngNag00OPSLEE}.

However, in our situation the formula simplifies as the inhomogeneity
$f$ will be constant in time. Indeed for $f(t)=x$, we get, with
a reparametrization of the integral
\begin{equation}
u(t)=\e^{-t\oA}u_{0}+\int_{0}^{t}\e^{-s\oA}x\d s=\e^{-t\oA}u_{0}+\oF_{t}x.\tag{\ensuremath{\star}}\label{eq:SolutionIACP}
\end{equation}
If moreover, the generator $\oA$ is boundedly invertible then the
integral can be solved using Lemma \ref{lem:PropertiesSemigroups},
and we have
\[
u(t)=\e^{-t\oA}u_{0}-\oA^{-1}\left(\e^{-t\oA}-\oI\right)x
\]
The following lemma collects and summarizes these important facts,
and is an immediate consequence of Lemma \ref{lem:PropertiesSemigroups}
and the classical well-posedness theory for linear (inhomogeneous)
abstract Cauchy problems (see e.g. Engel-Nagel Corollary VI.7.8)
\begin{lem}
\label{lem:ExistenceIACP}Let $x\in X$ and $u_{0}\in\dom(\oA)$ be
arbitrary. Define, for $t\in[0,\infty[$ the trajectory $u$ by \eqref{eq:SolutionIACP}.
Then $u$ is continuously differentiable, $u(t)\in X$ for all $t\geq0$
and is the unique classical solution of the inhomogeneous abstract
Cauchy problem $\dot{u}(t)=-\oA u(t)+x$.
\end{lem}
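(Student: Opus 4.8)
The plan is to split the trajectory into its homogeneous and particular parts, $u(t)=\e^{-t\oA}u_{0}+\oF_{t}x$, to verify regularity and domain membership termwise, to check the differential equation by a direct computation resting on Lemma \ref{lem:PropertiesSemigroups}, and finally to settle uniqueness by the classical argument of differentiating $s\mapsto\e^{-(t-s)\oA}w(s)$ along the difference $w$ of two solutions. Note that for $u$ to be a \emph{classical} solution I actually need $u(t)\in\dom(\oA)$, which is the natural reading of the statement and what I would establish.

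For existence I would first record domain membership and differentiability of the homogeneous part. Since $u_{0}\in\dom(\oA)$, standard semigroup theory gives that $t\mapsto\e^{-t\oA}u_{0}$ stays in $\dom(\oA)$ and is continuously differentiable with $\frac{\d}{\d t}\e^{-t\oA}u_{0}=-\oA\e^{-t\oA}u_{0}=-\e^{-t\oA}\oA u_{0}$; the last expression exhibits the derivative as the orbit map of $\oA u_{0}\in X$, hence continuous in $t$. For the particular part, Lemma \ref{lem:PropertiesSemigroups}(2) yields $\oF_{t}x\in\dom(\oA)$ for every $t$, and since $s\mapsto\e^{-s\oA}x$ is continuous the fundamental theorem of calculus gives $\frac{\d}{\d t}\oF_{t}x=\e^{-t\oA}x$, again continuous in $t$. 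Adding both contributions shows $u(t)\in\dom(\oA)$ and that $u$ is $C^{1}$ with $\dot{u}(t)=-\oA\e^{-t\oA}u_{0}+\e^{-t\oA}x$. To close the argument I would use Lemma \ref{lem:PropertiesSemigroups}(2) in the form $-\oA\oF_{t}x=\e^{-t\oA}x-x$ to compute
\[
-\oA u(t)+x=-\oA\e^{-t\oA}u_{0}-\oA\oF_{t}x+x=-\oA\e^{-t\oA}u_{0}+\e^{-t\oA}x=\dot{u}(t),
\]
while the initial condition follows from $\oF_{0}=0$, so that $u(0)=u_{0}$.

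For uniqueness I would let $v$ be any classical solution with the same data and set $w=u-v$, so that $w$ is $C^{1}$, $w(s)\in\dom(\oA)$, $\dot{w}=-\oA w$ and $w(0)=0$. Fixing $t>0$ and putting $g(s):=\e^{-(t-s)\oA}w(s)$ on $[0,t]$, the product rule (justified since $w(s)\in\dom(\oA)$ and using that $\oA$ commutes with the semigroup on its domain) gives $g'(s)=\oA\e^{-(t-s)\oA}w(s)+\e^{-(t-s)\oA}\dot{w}(s)=\oA\e^{-(t-s)\oA}w(s)-\e^{-(t-s)\oA}\oA w(s)=0$. Hence $g$ is constant and $w(t)=g(t)=g(0)=\e^{-t\oA}w(0)=0$, so the solution is unique.

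The only point demanding care — and the one place where the hypothesis $u_{0}\in\dom(\oA)$ is genuinely used — is the differentiability and domain invariance of the homogeneous part: for general $u_{0}\in X$ the map $t\mapsto\e^{-t\oA}u_{0}$ need neither be differentiable nor take values in $\dom(\oA)$. Everything else reduces to Lemma \ref{lem:PropertiesSemigroups} together with the fundamental theorem of calculus; in particular, the constant-in-time inhomogeneity $f\equiv x$ removes the additional temporal/spatial regularity hypotheses on $f$ that are needed in the general well-posedness theory for \eqref{eq:iACP}, which is why the result is an immediate specialization of it.
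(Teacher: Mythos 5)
Your proof is correct, and it is more self-contained than what the paper actually does: the paper gives no argument at all, declaring the lemma ``an immediate consequence of Lemma \ref{lem:PropertiesSemigroups} and the classical well-posedness theory for linear (inhomogeneous) abstract Cauchy problems'' (Engel--Nagel, Cor.\ VI.7.8). Your termwise verification --- differentiability and $\dom(\oA)$-invariance of $t\mapsto\e^{-t\oA}u_{0}$ from $u_{0}\in\dom(\oA)$, the identity $-\oA\oF_{t}x=\e^{-t\oA}x-x$ from Lemma \ref{lem:PropertiesSemigroups}(2) together with $\frac{\d}{\d t}\oF_{t}x=\e^{-t\oA}x$, and the standard uniqueness argument via $s\mapsto\e^{-(t-s)\oA}w(s)$ --- is exactly the content of that classical citation specialized to a constant inhomogeneity, so the two routes prove the same thing; yours buys independence from the general theory (where one would otherwise have to check the hypothesis that $f$ is either $C^{1}$ or continuous with values in $\dom(\oA)$, trivially satisfied here since $f\equiv x$ is constant), at the cost of a page of routine verification the author chose to outsource. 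Two of your observations are worth keeping: the statement's ``$u(t)\in X$'' is vacuous and must be read as $u(t)\in\dom(\oA)$ for ``classical solution'' to make sense, which your argument supplies; and the product rule for $g(s)=\e^{-(t-s)\oA}w(s)$ does require the brief justification you indicate (splitting the difference quotient and using local uniform boundedness of the semigroup plus $w(s)\in\dom(\oA)$), so it is right not to treat it as automatic.
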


\section{Split-step method on the product space\label{sec:Split-step-method}}

After presenting the product-space setting, we define the split-step
approximation operators. Then, we show stability (i.e. boundedness)
of the time-discretized trajectories and, finally, discuss the first
convergence results.

\subsection{Product-space setting}

For two given Banach spaces $\left(X_{j},\|\cdot\|_{X_{j}}\right)$,
we consider the Banach space $X=X_{1}\times X_{2}$ equipped with
the canonical norm, i.e.
\[
\|(x,y)\|_{X_{1}\times X_{2}}^{2}:=\|x\|_{X_{1}}^{2}+\|y\|_{X_{2}}^{2}\,.
\]
The identity operator on the spaces $X_{j}$ is denoted by $\oI:X_{j}\to X_{j}$,
the identity operator on the whole space $X$ is denoted by $\cI:=\begin{pmatrix}\oI & \cdot\\
\cdot & \oI
\end{pmatrix}:X\to X$. For a bounded operator $\cB=\begin{pmatrix}\oB_{1} & \oB_{12}\\
\oB_{21} & \oB_{2}
\end{pmatrix}:X\to X$, where $\oB_{j}:X_{j}\to X_{j}$ and $\oB_{ij}:X_{j}\to X_{i}$ are
bounded operators, an easy calculation shows
\begin{align*}
\|\cB\begin{pmatrix}x\\
y
\end{pmatrix}\|^{2} & =\|\oB_{1}x+\oB_{12}y\|_{X_{1}}^{2}+\|\oB_{21}x+\oB_{2}y\|_{X_{2}}^{2}\leq\\
 & \leq2\left(\|\oB_{1}\|^{2}+\|\oB_{21}\|^{2}\right)\|x\|_{X_{1}}^{2}+\left(\|\oB_{12}\|^{2}+\|\oB_{2}\|^{2}\right)\|y\|_{X_{1}}^{2}\\
 & \leq2\max\left\{ \|\oB_{1}\|^{2}+\|\oB_{21}\|^{2},\|\oB_{12}\|^{2}+\|\oB_{2}\|^{2}\right\} \left(\|x\|_{X_{1}}^{2}+\|y\|_{X_{1}}^{2}\right),
\end{align*}
which implies the crude estimate $\|\cB\|\leq\sqrt{2}\max\left\{ \left(\|\oB_{1}\|^{2}+\|\oB_{21}\|^{2}\right)^{1/2},\left(\|\oB_{12}\|^{2}+\|\oB_{2}\|^{2}\right)^{1/2}\right\} $.

\subsection{Split-step by inhomogeneous abstract Cauchy problem}

Now, we describe in detail the split-step method for approximating
the solution on a product space. We consider on the space $X=X_{1}\times X_{2}$
the operator
\[
-\cC:=-\cA+\cB,\quad\cA=\begin{pmatrix}\oA_{1} & \cdot\\
\cdot & \oA_{2}
\end{pmatrix},\quad\cB=\begin{pmatrix}\cdot & \oB_{1}\\
\oB_{2} & \cdot
\end{pmatrix},\quad-\cC\begin{pmatrix}u\\
v
\end{pmatrix}=\begin{pmatrix}-\oA_{1}u+\oB_{1}v\\
\oB_{2}u-\oA_{2}v
\end{pmatrix},
\]
where each $-\oA_{j}:\dom(\oA_{j})\subset X_{j}\to X_{j}$ are generators
of a contraction semigroup and the coupling operators $\oB_{1}:X_{2}\to X_{1}$
and $\oB_{2}:X_{1}\to X_{2}$ are bounded. (We comment on unbounded
operators in Section \ref{sec:UnboundedCoupling}.)

Clearly, also $-\cA:\dom(\cA)=\dom(\oA_{1})\times\dom(\oA_{2})\subset X_{1}\times X_{2}\to X$
is a generator of a semigroup $\e^{-t\cA}=\begin{pmatrix}\e^{-t\oA_{1}} & \cdot\\
\cdot & \e^{-t\oA_{2}}
\end{pmatrix}$, which is also a contraction semigroup. Since $\cB$ is bounded,
we have that $-\cC:\dom\left(\cC\right)=\dom\left(\cA\right)\subset X\to X$
is also a generator of a semigroup.

Here, we will not approximate the solution operator $\e^{-t\cC}$
by the Trotter-product formula $\left(\e^{-t/n\cA}\e^{t/n\cB}\right)^{n}$.
Instead, we are interested in an approximation that exploits the Block
structure of the underlying state space. As for the Trotter product
formula, the time-discretized iteration operator $\cT(\tau)$ depends
on the small time-step of length $\tau=t/n$ and consists of two parts,
i.e $\cT=\cT_{2}\cT_{1}$. Each operator $\cT_{j}$ is defined by
evolving only the component of $X_{j}$ and leaving the other component
constant. The first operator $\cT_{1}$ is given by solving the inhomogeneous
abstract Cauchy problem:
\begin{align*}
\begin{cases}
\dot{u}= & -\oA_{1}u+\oB_{1}v\\
\dot{v}= & 0
\end{cases} & ,\text{ for }t\in[0,\infty[,\quad u(0)=u_{0},\,v(0)=v_{0},
\end{align*}
 and maps $\cT_{1}(\tau):\begin{pmatrix}u_{0}\\
v_{0}
\end{pmatrix}\mapsto\begin{pmatrix}u(\tau)\\
v(\tau)
\end{pmatrix}$. The explicit solution of the evolution equation is given by Lemma
\ref{lem:ExistenceIACP}, and we have
\[
\begin{pmatrix}u\\
v
\end{pmatrix}(t=\tau)=\cT_{1}(\tau)\begin{pmatrix}u_{0}\\
v_{0}
\end{pmatrix}=\begin{pmatrix}\e^{-\tau\oA_{1}} & \int_{0}^{\tau}\e^{-\sigma\oA_{1}}\d\sigma\oB_{1}\\
\cdot & \oI
\end{pmatrix}\begin{pmatrix}u_{0}\\
v_{0}
\end{pmatrix}\,.
\]
The second operator $\cT_{2}$ is given by solving the inhomogeneous
abstract Cauchy problem:
\begin{align*}
\begin{cases}
\dot{u}= & 0\\
\dot{v}= & \oB_{2}u-\oA_{2}v
\end{cases} & ,\text{ for }t\in[0,\infty[,\quad u(0)=u_{0},\,v(0)=v_{0},
\end{align*}
and maps $\cT_{2}(\tau):\begin{pmatrix}u_{0}\\
v_{0}
\end{pmatrix}\mapsto\begin{pmatrix}u(\tau)\\
v(\tau)
\end{pmatrix}$. Hence, we have 
\[
\begin{pmatrix}u\\
v
\end{pmatrix}(t=\tau)=\cT_{2}(\tau)\begin{pmatrix}u_{0}\\
v_{0}
\end{pmatrix}=\begin{pmatrix}\oI & \cdot\\
\int_{0}^{\tau}\e^{-\sigma\oA_{2}}\d\sigma\oB_{2} & \e^{-\tau\oA_{2}}
\end{pmatrix}\begin{pmatrix}u_{0}\\
v_{0}
\end{pmatrix}\,.
\]
Hence, we conclude that the total solution operator at time $t=\tau$
is given by 
\begin{align}
\cT(\tau) & =\cT_{2}(\tau)\cT_{1}(\tau)=\begin{pmatrix}\oI & \cdot\\
\int_{0}^{\tau}\e^{-\sigma\oA_{2}}\d\sigma\oB_{2} & \e^{-\tau\oA_{2}}
\end{pmatrix}\begin{pmatrix}\e^{-\tau\oA_{1}} & \int_{0}^{\tau}\e^{-\sigma\oA_{1}}\d\sigma\oB_{1}\\
\cdot & \oI
\end{pmatrix}\nonumber \\
 & =\begin{pmatrix}\e^{-\tau\oA_{1}} & \int_{0}^{\tau}\e^{-\sigma\oA_{1}}\d\sigma\oB_{1}\\
\int_{0}^{\tau}\e^{-\sigma\oA_{2}\d\sigma}\oB_{2}\e^{-\tau\oA_{1}} & \int_{0}^{\tau}\e^{-\sigma\oA_{2}}\d\sigma\oB_{2}\cdot\int_{0}^{\tau}\e^{-\sigma\oA_{1}}\d\sigma\oB_{1}+\e^{-\tau\oA_{2}}
\end{pmatrix}\nonumber \\
 & =:\begin{pmatrix}\oE_{1}(\tau) & \oX_{1}(\tau)\\
\oX_{2}(\tau)\oE_{1}(\tau) & \oX_{2}(\tau)\oX_{1}(\tau)+\oE_{2}(\tau)
\end{pmatrix},\tag{AO}\label{eq:Definition=00005CcT}
\end{align}
where we have introduced the notation for the solution operator $\oE_{j}$
on the diagonal and the cross terms $\oX_{j}$
\[
\oE_{j}(\tau)=\e^{-\tau\oA_{j}},\quad\oX_{j}(\tau)=\int_{0}^{\tau}\e^{-\sigma\oA_{j}}\d\sigma\oB_{j}.
\]
Some easy facts regarding these operators are summarized in the next
lemma, which is an trivial consequence of Lemma \ref{lem:PropertiesSemigroups}.
\begin{lem}
\label{lem:Properties=00005CcT}Let $-\oA_{j}:\dom(\oA_{j})\subset X_{j}\to X_{j}$
be generators of contraction semigroups, and let $\oB_{1}:X_{2}\to X_{1}$,
$\oB_{2}:X_{1}\to X_{2}$ be bounded. Define the split-step approximation
operators $\left\{ \cT(\tau)\right\} _{\tau\geq0}$ by \eqref{eq:Definition=00005CcT}.
Then
\begin{enumerate}
\item For all $\tau\geq0$, the operators $\cT(\tau):X\to X$ are strongly
continuous bounded operators and $\cT(0)=\cI$,
\item For all $\tau\geq0$, we have $\|\oX_{i}(\tau)\|\leq\tau\|\oB_{i}\|$,
and there is a constant such that for all $\tau\geq0$, we have $\|\oA_{i}\oX_{i}(\tau)\|\leq2\|\oB_{i}\|$.
\item For all $x_{j}\in\dom(\oA_{j})$, we have $\oE_{j}'(\tau)x_{j}=-\oA_{j}\e^{-\tau\oA_{j}}x_{j}$;
For all $x\in X_{j}$ we have $\oX_{j}'(\tau)x=\oE_{j}(\tau)\oB_{j}$.
\end{enumerate}
\end{lem}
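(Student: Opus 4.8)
The plan is to derive all three claims directly from Lemma \ref{lem:PropertiesSemigroups} applied separately to the two generators $-\oA_1$ and $-\oA_2$, using crucially that both semigroups are contractions, so that the constant $M$ appearing in Lemma \ref{lem:PropertiesSemigroups} equals $1$. Writing $\oF_\tau^{(j)}x:=\int_0^\tau\e^{-\sigma\oA_j}x\,\d\sigma$ for the auxiliary operator attached to $-\oA_j$, the point is simply to recognize that the cross terms are $\oX_j(\tau)=\oF_\tau^{(j)}\oB_j$ while the diagonal terms are $\oE_j(\tau)=\e^{-\tau\oA_j}$, after which everything reduces to bookkeeping.

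For part (1) I would argue entry by entry. Each $\oE_j(\tau)$ is bounded since it is a contraction, and each $\oF_\tau^{(j)}$ is bounded by Lemma \ref{lem:PropertiesSemigroups}(1); hence $\oX_j(\tau)=\oF_\tau^{(j)}\oB_j$ is bounded as a composition of bounded operators, and the remaining matrix entries are finite products and sums of such operators, so $\cT(\tau)$ is bounded on $X$. Strong continuity of $\tau\mapsto\cT(\tau)$ then follows from strong continuity of the semigroups $\e^{-\tau\oA_j}$ together with the continuity of $\tau\mapsto\oF_\tau^{(j)}x$, which holds because $\sigma\mapsto\e^{-\sigma\oA_j}x$ is continuous and the integral depends continuously on its upper limit. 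Finally $\cT(0)=\cI$ is obtained by direct substitution: $\oE_j(0)=\oI$ while $\oX_j(0)=0$ as an empty integral, so the off-diagonal and coupling terms vanish.

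For part (2) the estimates are immediate once the identity $\oX_i(\tau)=\oF_\tau^{(i)}\oB_i$ is in place. The bound $\|\oX_i(\tau)\|\le\|\oF_\tau^{(i)}\|\,\|\oB_i\|\le\tau\|\oB_i\|$ uses Lemma \ref{lem:PropertiesSemigroups}(1) with $M=1$, and $\|\oA_i\oX_i(\tau)\|\le\|\oA_i\oF_\tau^{(i)}\|\,\|\oB_i\|\le(M+1)\|\oB_i\|=2\|\oB_i\|$ uses Lemma \ref{lem:PropertiesSemigroups}(2), again with $M=1$. Here one should note that $\oA_i\oX_i(\tau)$ is a well-defined bounded operator precisely because $\oF_\tau^{(i)}$ maps into $\dom(\oA_i)$, which is part of the content of Lemma \ref{lem:PropertiesSemigroups}(2). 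For part (3), the identity $\oE_j'(\tau)x_j=-\oA_j\e^{-\tau\oA_j}x_j$ for $x_j\in\dom(\oA_j)$ is the standard differentiability of the semigroup orbit on the generator domain, while the identity for $\oX_j'$ is the fundamental theorem of calculus applied to the continuous (hence Bochner-integrable) integrand $\sigma\mapsto\e^{-\sigma\oA_j}\oB_j x$, giving $\oX_j'(\tau)x=\e^{-\tau\oA_j}\oB_j x=\oE_j(\tau)\oB_j x$.

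I do not expect any genuine obstacle: the lemma is a bookkeeping consequence of Lemma \ref{lem:PropertiesSemigroups}, exactly as the text advertises. The only points requiring mild care are verifying strong continuity at $\tau=0$ in part (1), namely that $\oX_j(\tau)\to 0$ strongly, and keeping track of the contraction constant $M=1$ so that the numerical constants $\tau\|\oB_i\|$ and $2\|\oB_i\|$ in part (2) come out as stated rather than with an extraneous factor.
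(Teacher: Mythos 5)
Your proposal is correct and follows exactly the route the paper intends: the paper gives no written proof, stating only that the lemma is an immediate consequence of Lemma \ref{lem:PropertiesSemigroups}, and your argument fills in precisely those routine details (identifying $\oX_j(\tau)=\oF_\tau^{(j)}\oB_j$, using $M=1$ for contraction semigroups to get the constants $\tau\|\oB_i\|$ and $2\|\oB_i\|$, and the standard differentiability facts). No gaps.
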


Iterating the operators $\cT(\tau)$ $n$-times, we get a trajectory
till time $t\in[0,\infty[$. The main question, which is addressed
in that paper, is to show convergence
\[
\cT(t/n)^{n}\to\e^{-t\cC}.
\]

\subsection{Stability analysis and convergence in the strong topology}

To have a useful approximation, the stability of the iterated operator
$\cT(t/n)^{n}$ for $t\in[0,\infty[$ has to be established.
\begin{prop}
\label{prop:StabilityStrongConvergence}Let $-\oA_{j}:\dom(\oA_{j})\subset X_{j}\to X_{j}$
be generators of strongly continuous contraction semigroups, and let
$\oB_{1}:X_{2}\to X_{1}$, $\oB_{2}:X_{1}\to X_{2}$ be bounded. Then
for all $t\in[0,\infty[$ and $n\in\N$ we have
\[
\|\cT(t/n)^{n}\|\leq\e^{t\left(\|\oB_{1}\|+\|\oB_{2}\|\right)}.
\]
Moreover, for all $t\in[0,\infty[$, $\left(\cT(t/n)^{n}\right)_{n\in\N}$
converges to the semigroup $\e^{-t\cC}$ in the strong topology, i.e.
for all $x\in X$, we have
\[
\lim_{n\to\infty}\cT(t/n)^{n}x=\e^{-t\cC}x,
\]
which is uniformly in time on bounded intervals.
\end{prop}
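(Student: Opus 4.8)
The proof has two independent parts, the uniform stability bound and the strong convergence, and I would treat them in that order since the second relies on the first. For the stability estimate the plan is to exploit the factorization $\cT(\tau)=\cT_{2}(\tau)\cT_{1}(\tau)$ from \eqref{eq:Definition=00005CcT} together with submultiplicativity of the operator norm. I would bound each factor by splitting it into a block-diagonal contraction plus a strictly off-diagonal remainder: $\cT_{1}(\tau)$ equals $\mathrm{diag}(\oE_{1}(\tau),\oI)$ plus the matrix whose only nonzero entry is $\oX_{1}(\tau)$ in the upper-right corner. The first summand has norm at most $1$ because $\oE_{1}(\tau)=\e^{-\tau\oA_{1}}$ is a contraction and the norm on $X$ is the Euclidean combination of the component norms, while the second has norm $\|\oX_{1}(\tau)\|\le\tau\|\oB_{1}\|$ by Lemma \ref{lem:Properties=00005CcT}(2). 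Hence $\|\cT_{1}(\tau)\|\le 1+\tau\|\oB_{1}\|$, and symmetrically $\|\cT_{2}(\tau)\|\le 1+\tau\|\oB_{2}\|$, so that $\|\cT(\tau)\|\le(1+\tau\|\oB_{1}\|)(1+\tau\|\oB_{2}\|)\le\e^{\tau(\|\oB_{1}\|+\|\oB_{2}\|)}$. Taking $\tau=t/n$ and using submultiplicativity once more gives
\[
\|\cT(t/n)^{n}\|\le\|\cT(t/n)\|^{n}\le\big[(1+\tfrac{t}{n}\|\oB_{1}\|)(1+\tfrac{t}{n}\|\oB_{2}\|)\big]^{n}\le\e^{t(\|\oB_{1}\|+\|\oB_{2}\|)},
\]
which is the asserted bound.

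For the strong convergence I would invoke the Chernoff product formula \cite{Cher74PFNSAUO} (in the formulation of \cite[Thm.~III.5.2]{EngNag00OPSLEE}), reducing the task to verifying its hypotheses for $\{\cT(\tau)\}_{\tau\ge0}$. Three of them are already at hand: $\cT(0)=\cI$ and strong continuity hold by Lemma \ref{lem:Properties=00005CcT}(1), and the inequality just established provides the stability requirement $\|\cT(\tau)^{n}\|\le\e^{\omega n\tau}$ with $\omega=\|\oB_{1}\|+\|\oB_{2}\|$ and $M=1$. The decisive point is to show that the strong derivative of $\cT$ at $\tau=0$ exists on $\dom(\cC)=\dom(\oA_{1})\times\dom(\oA_{2})$ and equals $-\cC$. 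I would compute this block by block for $(x,y)\in\dom(\cC)$: the diagonal entries give $\tfrac1\tau(\oE_{j}(\tau)-\oI)\to-\oA_{j}$ on $\dom(\oA_{j})$ by definition of the generator, the upper-right entry gives $\tfrac1\tau\oX_{1}(\tau)y\to\oB_{1}y$ by Lemma \ref{lem:PropertiesSemigroups}(1) and Lemma \ref{lem:Properties=00005CcT}(3), and the quadratic entry $\oX_{2}(\tau)\oX_{1}(\tau)$ is $O(\tau^{2})$ by Lemma \ref{lem:Properties=00005CcT}(2) and thus vanishes after division by $\tau$. Collecting the limits yields exactly $-\cC(x,y)^{\mathrm{T}}$. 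Since $-\cC$ is itself the generator of $\e^{-t\cC}$, its domain $\dom(\cC)$ is automatically a core, so Chernoff applies and delivers $\cT(t/n)^{n}x\to\e^{-t\cC}x$ for every $x\in X$, uniformly for $t$ in bounded intervals.

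I expect the only genuinely delicate step to be the lower-left entry $\oX_{2}(\tau)\oE_{1}(\tau)$, where a naive passage to the limit fails because the argument itself depends on $\tau$. Writing $R(\tau):=\int_{0}^{\tau}\e^{-\sigma\oA_{2}}\d\sigma$, so that $\oX_{2}(\tau)=R(\tau)\oB_{2}$, I would split
\[
\tfrac1\tau\oX_{2}(\tau)\oE_{1}(\tau)x-\oB_{2}x=\tfrac1\tau R(\tau)\big(\oB_{2}\oE_{1}(\tau)x-\oB_{2}x\big)+\big(\tfrac1\tau R(\tau)-\oI\big)\oB_{2}x,
\]
and control the two terms using the uniform bound $\|\tfrac1\tau R(\tau)\|\le1$ from Lemma \ref{lem:PropertiesSemigroups}(1): the first term tends to $0$ because $\oE_{1}(\tau)x\to x$ and $\oB_{2}$ is bounded, and the second tends to $0$ by the strong convergence $\tfrac1\tau R(\tau)z\to z$ of the same lemma applied to $z=\oB_{2}x$. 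This two-term device is the one place where the computation is more than a direct term-by-term limit.
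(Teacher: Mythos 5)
Your proposal is correct and follows essentially the same route as the paper: the bound $\|\cT_j(\tau)\|\le 1+\tau\|\oB_j\|$ (obtained in the paper by a direct vector-norm computation rather than your diagonal-plus-off-diagonal splitting, but with the same outcome) followed by Chernoff's product formula after verifying that the strong derivative of $\cT$ at $\tau=0$ is $-\cC$ on $\dom(\cC)$. Your two-term treatment of the lower-left entry $\oX_{2}(\tau)\oE_{1}(\tau)$ is in fact slightly more careful than the paper's, which passes to that limit without comment.
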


\begin{proof}
Using that $-\oA_{j}$ generates a contraction semigroup, we have
\begin{align*}
\|\cT_{1}(\tau)\begin{pmatrix}x\\
y
\end{pmatrix}\|^{2} & =\|\oE_{1}(\tau)x+\oX_{1}(\tau)y\|_{X_{1}}^{2}+\|y\|_{X_{2}}^{2}\leq\left(\|\oE_{1}(\tau)x\|_{X_{1}}+\|\oX_{1}(\tau)y\|_{X_{1}}\right)^{2}+\|y\|_{X_{2}}^{2}\\
 & \leq\|x\|_{X_{1}}^{2}+2\tau\|\oB_{1}\|\cdot\|x\|_{X_{1}}\cdot\|y\|_{X_{2}}+\left(1+\tau^{2}\|\oB_{1}\|^{2}\right)\|y\|_{X_{2}}^{2}\\
 & \leq\left(1+2\tau\|\oB_{1}\|+\tau^{2}\|\oB_{1}\|^{2}\right)\left(\|x\|_{X_{1}}^{2}+\|y\|_{X_{2}}^{2}\right)=\left(1+\tau\|\oB_{1}\|\right)^{2}\left(\|x\|_{X_{1}}^{2}+\|y\|_{X_{2}}^{2}\right).
\end{align*}
Hence, we get that $\|\cT_{1}(\tau)\|\leq1+\tau\|\oB_{1}\|$ for all
$\tau\geq0$. Similarly, we obtain $\|\cT_{2}(\tau)\|\leq1+\tau\|\oB_{2}\|$
for all $\tau\geq0$. Hence, we get
\begin{align*}
\|\cT(t/n)^{n}\|\leq\|\cT_{2}(t/n)\|^{n}\cdot\|\cT_{1}(t/n)\|^{n} & \leq\left(1+\frac{t}{n}\|\oB_{2}\|\right)^{n}\left(1+\frac{t}{n}\|\oB_{1}\|\right)^{n}\leq\e^{t\left(\|\oB_{1}\|+\|\oB_{2}\|\right)}\,.
\end{align*}
To prove convergence in the strong topology, we rely on the famous
result of Chernoff, see e.g. \cite[Corollary III.5.3]{EngNag00OPSLEE}.
Since the stability has been already shown, it suffices to show that
the derivative of $\cT(t)$ at $t\geq0$ is given by $-\cC$ (which,
as we already know, is a generator). For this let $\begin{pmatrix}u,v\end{pmatrix}^{\mathrm{T}}\in\dom\left(\cC\right)=\dom\left(\cA\right)$.
By Lemma \ref{lem:Properties=00005CcT}, we obtain
\begin{align*}
\frac{1}{t}\left(\cT(t)-\cI\right)\begin{pmatrix}u\\
v
\end{pmatrix} & =\frac{1}{t}\begin{pmatrix}\oE_{1}(t)-\oI & \oX_{1}(t)\\
\oX_{2}(t)\oE_{1}(t) & \oX_{2}(t)\oX_{1}(t)+\oE_{2}(t)-\oI
\end{pmatrix}\begin{pmatrix}u\\
v
\end{pmatrix}\\
 & \to\begin{pmatrix}-\oA_{1}u+\oB_{1}v\\
\oB_{2}u-\oA_{1}v
\end{pmatrix}=-\cC\begin{pmatrix}u\\
v
\end{pmatrix},
\end{align*}
as $t\to0$. This implies $\lim_{n\to\infty}\cT(t/n)^{n}x=\e^{-t\cC}x$
as desired.
\end{proof}
The main result of the paper is to show that the convergence of the
approximation $\cT(t/n)^{n}\to\e^{-t\cC}$ can be improved to convergence
in the operator norm, and that the convergence can be estimated. To
show the ideas, we shortly discuss the situation, when $-\oA_{j}$
are bounded operators.

\subsection{Convergence for bounded operators}

Assuming that $-\oA_{j}$ are bounded operators, the semigroups are
given by the exponential of the generators, and we have
\[
\e^{-\tau\mathbb{\oA}_{j}}=\oI-\tau\oA_{j}+O(\tau^{2}),\quad\text{as}\quad\tau\to0\,.
\]
As we will see, estimating $\cT(t/n)^{n}\to\e^{-t\cC}$ is similar
to the unbounded case and relies on the telescopic representation
of the product:
\[
\mathbb{\cT}^{n}-\mathbb{\cS}^{n}=\sum_{k=0}^{n-1}\mathbb{\cT}^{n-1-k}(\cT-\mathbb{\cS})\mathbb{\cS}^{k}.
\]
The crucial idea is to get a good convergence rate for $\cT-\mathbb{\cS}$,
and to ensure that the remainders $\mathbb{\cT}^{n-1-k}$ and $\mathbb{\cS}^{k}$
can be bounded.

Indeed, in the situation of bounded operators we have, for $\tau\to0$,
\begin{align*}
\cT(\tau)-\e^{-\tau\cC} & =\begin{pmatrix}\oE_{1}(\tau) & \oX_{1}(\tau)\\
\oX_{2}(\tau)\oE_{1}(\tau) & \oX_{2}(\tau)\oX_{1}(\tau)+\oE_{2}(\tau)
\end{pmatrix}-\left\{ \cI-\tau\cC\right\} +O(\tau^{2})\\
 & =\begin{pmatrix}\oE_{1}(\tau) & \oX_{1}(\tau)\\
\oX_{2}(\tau)\oE_{1}(\tau) & \oX_{2}(\tau)\oX_{1}(\tau)+\oE_{2}(\tau)
\end{pmatrix}-\begin{pmatrix}\oI-\tau\oA_{1} & \tau\oB_{1}\\
\tau\oB_{2} & \oI-\tau\oA_{2}
\end{pmatrix}+O(\tau^{2})=O(\tau^{2}),
\end{align*}
where we have used that 
\begin{align*}
\oE_{j}(\tau) & =\oI-\tau\oA_{j}+O(\tau^{2}),\quad\oX_{j}(\tau)=\tau\oB_{j}+O(\tau^{2}).
\end{align*}
Hence, we get 
\begin{align*}
\|\cT(\tau)^{n}-\e^{-t\cC}\| & =\|\cT(\tau)^{n}-\left(\e^{-\tau\cC}\right)^{n}\|\\
 & =\|\sum_{k=0}^{n-1}\mathbb{\cT}(\tau)^{n-1-k}(\cT(\tau)-\e^{-\tau\cC})\e^{-\tau k\cC}\|\\
 & \leq\sum_{k=0}^{n-1}\|\mathbb{\cT}(\tau)^{n-1-k}\|\cdot\|\cT(\tau)-\e^{-\tau\cC}\|\cdot\|\e^{-\tau k\cC}\|\lesssim n\cdot\tau^{2}\lesssim\frac{t^{2}}{n}\,.
\end{align*}
Hence, the split-step method converges with order $O(n^{-1})$, which
is summarized in the next proposition.
\begin{prop}
Let $\oA_{i}:X_{i}\to X_{i}$, $\oB_{1}:X_{2}\to X_{1}$ and $\oB_{2}:X_{1}\to X_{2}$
be bounded. Define the split-step approximation operator family $\cT$
by \eqref{eq:Definition=00005CcT}. Then there is a constant $C=C(\|A\|,\|B\|)$
such that for all $t\geq0$ and $n\in\N$ we have $\|\cT(\tau)^{n}-\e^{-t\cC}\|\leq\frac{C}{n}t^{2}$.
\end{prop}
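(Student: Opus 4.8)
The plan is to run exactly the telescoping argument sketched above, but to make the three ingredients precise. Writing $\cS(\tau):=\e^{-\tau\cC}$ and using the identity
\[
\cT(\tau)^n-\cS(\tau)^n=\sum_{k=0}^{n-1}\cT(\tau)^{n-1-k}\bigl(\cT(\tau)-\cS(\tau)\bigr)\cS(\tau)^k,
\]
the estimate reduces to two independent facts: a \emph{local consistency bound} $\|\cT(\tau)-\e^{-\tau\cC}\|\leq C_1\tau^2$, and \emph{uniform stability} of the powers $\cT(\tau)^{m}$ and $\cS(\tau)^{m}$. Granting both, the sum has $n$ summands, each a product (stability) $\cdot\,C_1\tau^2\,\cdot$ (stability); with $\tau=t/n$ this is $O(t^2/n)$.

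First I would dispatch stability. Since the $\oA_j$ are bounded, $\|\e^{-s\oA_j}\|\leq\e^{\|\oA_j\|s}$ and $\|\oX_j(\tau)\|\leq\tau\e^{\|\oA_j\|\tau}\|\oB_j\|$, so the norm estimate in the proof of Proposition \ref{prop:StabilityStrongConvergence} adapts to give $\|\cT(\tau)\|\leq\e^{c\tau}$ for a constant $c=c(\|\oA_j\|,\|\oB_j\|)$, hence $\|\cT(\tau)^m\|\leq\e^{cm\tau}$. Likewise $\cC$ is bounded, so $\|\cS(\tau)^m\|=\|\e^{-m\tau\cC}\|\leq\e^{\|\cC\|m\tau}$. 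For $m\tau\leq t$ both are at most $\e^{\gamma t}$ with $\gamma=\max\{c,\|\cC\|\}$, and the product of the two stability factors in each summand of the telescoping sum is therefore bounded by $\e^{\gamma t}$.

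The core is the local bound. Because the $\oA_j$ are bounded, both $\tau\mapsto\cT(\tau)$ and $\tau\mapsto\e^{-\tau\cC}$ are smooth (indeed analytic) operator-valued curves that agree at $\tau=0$ (value $\cI$) and share the same first derivative $-\cC$ there — the derivative of $\cT$ was already computed in the proof of Proposition \ref{prop:StabilityStrongConvergence}. Taylor's theorem with integral remainder then yields
\[
\|\cT(\tau)-\e^{-\tau\cC}\|\leq\frac{\tau^2}{2}\sup_{s\in[0,\tau]}\bigl\|\cT''(s)-(\e^{-s\cC})''\bigr\|.
\]
Equivalently, and more explicitly, I would expand each block of $\cT(\tau)$ via $\oE_j(\tau)=\oI-\tau\oA_j+O(\tau^2)$ and $\oX_j(\tau)=\tau\oB_j+O(\tau^2)$ (from Lemma \ref{lem:Properties=00005CcT}), compare with $\cI-\tau\cC$, whose blocks are $\oI-\tau\oA_1$, $\tau\oB_1$, $\tau\oB_2$, $\oI-\tau\oA_2$, and verify that all first-order contributions cancel entrywise — in particular $\oX_2(\tau)\oE_1(\tau)=\tau\oB_2+O(\tau^2)$ and the corner $\oX_2(\tau)\oX_1(\tau)=O(\tau^2)$. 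Either way $\cT(\tau)-\e^{-\tau\cC}=O(\tau^2)$.

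The hard part will be uniformity rather than the second-order expansion itself: the remainder in the consistency bound is not uniform over all $\tau$, because the second derivatives of the cross term $\oX_2\oE_1$ and of $\e^{-s\cC}$ carry factors growing in $s$, so $\|\cT(\tau)-\e^{-\tau\cC}\|\leq C_1\tau^2$ holds only for $\tau$ in a bounded range. This is harmless since the bound is always applied with $\tau=t/n$ and $s\leq t/n\leq t$; it does mean that the final constant genuinely carries the stability factor, i.e. the bound produced is $\|\cT(t/n)^n-\e^{-t\cC}\|\leq C_1\e^{\gamma t}\,t^2/n$, so the statement $C=C(\|A\|,\|B\|)$ is to be read as uniform on bounded time intervals (for $t\in[0,T]$ one sets $C=C_1\e^{\gamma T}$). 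Inserting the consistency and stability bounds into the telescoping identity gives $n$ terms each at most $\e^{\gamma t}C_1\tau^2$, and with $\tau=t/n$ this sums to $C_1\e^{\gamma t}t^2/n$, as claimed.
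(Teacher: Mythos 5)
Your proposal is correct and follows essentially the same route as the paper: the telescopic identity $\cT^{n}-\cS^{n}=\sum_{k=0}^{n-1}\cT^{n-1-k}(\cT-\cS)\cS^{k}$, the blockwise expansions $\oE_{j}(\tau)=\oI-\tau\oA_{j}+O(\tau^{2})$ and $\oX_{j}(\tau)=\tau\oB_{j}+O(\tau^{2})$ to get consistency of order $\tau^{2}$, and the stability bounds on the powers of $\cT(\tau)$ and $\e^{-\tau\cC}$. Your additional observation that the constant necessarily carries an exponential stability factor in $t$ (so the estimate is uniform only on bounded time intervals) is a fair and accurate reading of what the paper's own $\lesssim$ conceals.
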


We note that, in general no better convergence than of order $O(n^{-1})$
can be expected. Moreover, the above calculation suggest that $\cT_{\mathrm{B}}(\tau):=\begin{pmatrix}\oE_{1}(\tau) & \tau\oB_{1}\\
\tau\oB_{2} & \oE_{2}(\tau)
\end{pmatrix}$, where the integrand $\e^{-\sigma\oA_{j}}$ in the coupling term
$\oX_{j}(\tau)$ is replaced by the constant identity $\oI$, can
be used as another approximation. Indeed, following the lines of the
proof of Proposition \ref{prop:StabilityStrongConvergence} the approximation
based on $\cT_{\mathrm{B}}$ converges on the strong topology as well.
However, for proving convergence in operator norm the regularization
$\e^{-\sigma\oA_{j}}$ is needed (see also Section \ref{subsec:Other-approximations},
where other different approximations are discussed).

\section{Operator-norm convergence rate analysis}

In this section, we show the convergence $\cT(t/n)^{n}\to\e^{-t\cC}$
in the operator norm as $n\to\infty$, and that the convergence rate
can be estimated. The overall assumptions is that the operators $-\oA_{j}:\dom\left(\oA_{j}\right)\to X_{j}$
are generators of holomorphic contraction semigroups and that the
linear operators $\oB_{1}:X_{2}\to X_{1}$ and $\oB_{2}:X_{1}\to X_{2}$
are bounded. In particular, there is a constant $M_{A}>0$ such that
for all $t>0$
\begin{equation}
\|t\oA_{i}\e^{-t\oA_{i}}\|\leq M_{A}\,,\label{eq:EstimateHolomorphicSemigroups=00005CoA}
\end{equation}
which we will use frequently. Since $\cA$ is diagonal, we easily
see that $-\cA:\dom(\cA)\subset X\to X$ is a generator of a contraction
semigroup. In addition, we have the same estimate $\|t\cA\e^{-t\cA}\|\leq M_{A}$,
which shows that the semigroup $\e^{-t\cA}$ is holomorphic as well.
Since the coupling operator $\cB$ is bounded, also $-\cC$ is a generator
of a semigroup with $\|\e^{-t\cC}\|\leq\e^{t\|\cB\|}$ which is also
holomorphic, i.e. there is a constant $M_{C}>0$ such that for all
$t>0$ we have $\|t\cC\e^{-t\cC}\|\leq M_{C}$.

Moreover, we will use the following relations between the operator
norms of $\oB_{j}$ and $\cB$:
\[
\|\cB\|=\max\left\{ \|\oB_{1}\|,\|\oB_{2}\|\right\} \leq\|\oB_{1}\|+\|\oB_{2}\|\leq2\|\cB\|\,.
\]

\subsection{Auxiliary lemmas}

We first collect important auxiliary lemmas which are used to show
that main result. %
\begin{comment}
Recall the following notation:
\begin{align*}
\cT_{1}(\tau) & =\begin{pmatrix}\e^{-\tau\oA_{1}} & \int_{0}^{\tau}\e^{-\sigma\oA_{1}}\d\sigma\oB_{1}\\
\cdot & \oI
\end{pmatrix},\quad\cT_{2}(\tau)=\begin{pmatrix}\oI & \cdot\\
\int_{0}^{\tau}\e^{-\sigma\oA_{2}}\d\sigma\oB_{2} & \e^{-\tau\oA_{2}}
\end{pmatrix},\\
\cT_{2}(\tau)\cT_{1}(\tau) & =\begin{pmatrix}\oE_{1}(\tau) & \oX_{1}(\tau)\\
\oX_{2}(\tau)\oE_{1}(\tau) & \oX_{2}(\tau)\oX_{1}(\tau)+\oE_{2}(\tau)
\end{pmatrix}\,.
\end{align*}
\textcolor{blue}{Maybe not needed??!?!} 
\end{comment}
We frequently use the matrix multiplication rule for operators (wherever
they are defined):
\[
{\cal C}{\cal D}=\begin{pmatrix}\oC_{1} & \oC_{2}\\
\oC_{3} & \oC_{4}
\end{pmatrix}\begin{pmatrix}\mathbf{D}_{1} & \mathbf{D}_{2}\\
\mathbf{D}_{3} & \mathbf{D}_{4}
\end{pmatrix}=\begin{pmatrix}\oC_{1}\oD_{1}+\oC_{2}\oD_{3} & \oC_{1}\oD_{2}+\oC_{2}\oD_{4}\\
\oC_{3}\oD_{1}+\oC_{4}\oD_{3} & \oC_{3}\oD_{2}+\oC_{4}\oD_{4}
\end{pmatrix}\,.
\]

\begin{lem}
\label{lem:AuxLemmaHolomorphicSemigroups}Let $-\oA_{j}:\dom\left(\oA_{j}\right)\subset X_{j}\to X_{j}$
be generators of holomorphic contraction semigroups, and let $\oB_{1}:X_{2}\to X_{1}$,
$\oB_{2}:X_{1}\to X_{2}$ be bounded. Then, there is a constant $C_{1}=C_{1}(\|\oB_{1}\|,\|\oB_{2}\|)>0$
such that for all $k\in\left\{ 1,\dots,n\right\} $ and $\tau=\frac{t}{n}>0$
we have 
\[
\|\cA\cT(\tau)^{k}\|\leq C_{1}\e^{t\left(\|\oB_{1}\|+\|\oB_{2}\|\right)}\left(1+\log k\right)+\frac{M_{A}}{k\tau}.
\]
\end{lem}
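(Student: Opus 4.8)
The plan is to compare $\cT(\tau)^{k}$ with the diagonal holomorphic semigroup $\e^{-k\tau\cA}$ and to isolate the genuinely singular part. I would first write
\[
\cA\cT(\tau)^{k}=\cA\e^{-k\tau\cA}+\cA\bigl(\cT(\tau)^{k}-\e^{-k\tau\cA}\bigr).
\]
The first summand is controlled directly by holomorphicity of $\e^{-t\cA}$: since $\|s\cA\e^{-s\cA}\|\leq M_{A}$ for all $s>0$, taking $s=k\tau$ gives $\|\cA\e^{-k\tau\cA}\|\leq M_{A}/(k\tau)$, which is exactly the last term in the claimed bound. Everything else must be absorbed into the logarithmic term.

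For the remainder I would telescope from the \emph{left} with semigroup factors, so that $\cA$ always lands on a semigroup rather than on a power of $\cT(\tau)$:
\[
\cA\bigl(\cT(\tau)^{k}-\e^{-k\tau\cA}\bigr)=\sum_{j=0}^{k-1}\cA\e^{-(k-1-j)\tau\cA}\bigl(\cT(\tau)-\e^{-\tau\cA}\bigr)\cT(\tau)^{j}.
\]
The key structural observation is that the correction $\cT(\tau)-\e^{-\tau\cA}$ is small: a block computation gives
\[
\cT(\tau)-\e^{-\tau\cA}=\begin{pmatrix}\cdot & \oX_{1}(\tau)\\ \oX_{2}(\tau)\oE_{1}(\tau) & \oX_{2}(\tau)\oX_{1}(\tau)\end{pmatrix},
\]
so that $\|\cT(\tau)-\e^{-\tau\cA}\|\lesssim\tau$ by Lemma \ref{lem:Properties=00005CcT} (with the corner entry even of order $\tau^{2}$). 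Moreover, because each $\oX_{i}(\tau)$ maps into $\dom(\oA_{i})$ with $\|\oA_{i}\oX_{i}(\tau)\|\leq2\|\oB_{i}\|$, the product $\cA\bigl(\cT(\tau)-\e^{-\tau\cA}\bigr)$ is in fact a \emph{bounded} operator whose norm is controlled by $\|\oB_{1}\|,\|\oB_{2}\|$.

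I would then estimate the sum term by term, using $\|\cT(\tau)^{j}\|\leq\e^{t(\|\oB_{1}\|+\|\oB_{2}\|)}$ (from the stability estimate in Proposition \ref{prop:StabilityStrongConvergence}, since $j\tau\leq t$). For the indices $j\leq k-2$, writing $m=k-1-j\geq1$, I bound $\|\cA\e^{-m\tau\cA}\|\leq M_{A}/(m\tau)$ and $\|\cT(\tau)-\e^{-\tau\cA}\|\lesssim\tau$, so each term is $\lesssim\frac{M_{A}}{m}\e^{t(\|\oB_{1}\|+\|\oB_{2}\|)}$; summing over $m=1,\dots,k-1$ produces the harmonic sum $\sum_{m=1}^{k-1}m^{-1}\lesssim1+\log k$. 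The remaining boundary index $j=k-1$ has $m=0$, where $\cA$ must act directly on the correction; here I use the boundedness of $\cA\bigl(\cT(\tau)-\e^{-\tau\cA}\bigr)$ together with $\|\cT(\tau)^{k-1}\|\leq\e^{t(\|\oB_{1}\|+\|\oB_{2}\|)}$ to bound it by a constant times $\e^{t(\|\oB_{1}\|+\|\oB_{2}\|)}$. Collecting the three contributions yields the asserted estimate.

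The main obstacle is the balancing inside the sum: a crude bound that lets $\cA$ hit the correction in every term would give $\|\cA(\cT(\tau)-\e^{-\tau\cA})\|\cdot\|\cT(\tau)^{j}\|$, which is bounded per term but sums to the far-too-large $O(k)$. The whole point is to trade the extra smallness $\|\cT(\tau)-\e^{-\tau\cA}\|\lesssim\tau$ against the holomorphic singularity $\|\cA\e^{-m\tau\cA}\|\leq M_{A}/(m\tau)$, so that the factors of $\tau$ cancel and only a harmonic sum survives; getting these two estimates to meet cleanly, and correctly separating the single boundary term $m=0$ where no semigroup regularisation is available, is the delicate part. A minor additional care is needed because $\tau=t/n$ need not be small: the order-$\tau^{2}$ corner entry and similar terms carry surplus powers of $\tau\leq t$, which I would absorb into the exponential using $x\leq\e^{x}$.
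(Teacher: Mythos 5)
Your proposal is correct and follows essentially the same route as the paper: the splitting $\cA\cT(\tau)^{k}=\cA\e^{-k\tau\cA}+\cA\bigl(\cT(\tau)^{k}-\e^{-k\tau\cA}\bigr)$, the left-telescoping so that $\cA$ always lands on a semigroup factor, trading $\|\cT(\tau)-\e^{-\tau\cA}\|\lesssim\tau$ against $\|\cA\e^{-m\tau\cA}\|\leq M_{A}/(m\tau)$ to get a harmonic sum, and handling the boundary term $j=k-1$ via the boundedness of $\cA\bigl(\cT(\tau)-\e^{-\tau\cA}\bigr)$ from $\|\oA_{i}\oX_{i}(\tau)\|\leq2\|\oB_{i}\|$. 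Your explicit remark that $\cA\bigl(\cT(\tau)-\e^{-\tau\cA}\bigr)$ is merely bounded (not $O(\tau)$) is in fact slightly more careful than the paper's statement at that point, and is what the paper actually uses in its final display.
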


\begin{proof}
Using the decomposition
\[
\cA\cT(\tau)^{k}=\cA\left(\cT(\tau)^{k}-\e^{-\tau k\cA}\right)+\cA\e^{-\tau k\cA},
\]
we see that the second term is bounded by $\|\cA\e^{-\tau k\cA}\|\leq\frac{1}{\tau k}M_{A}$.
For the first term, we use the telescopic representation of the product
$\mathbb{\cS}^{k}-\mathbb{\cT}^{k}=\sum_{j=0}^{k-1}\mathbb{\cS}^{k-1-j}(\mathbb{\cS}-\cT)\mathbb{\cT}^{j},$
and get
\begin{align*}
\cA\left(\e^{-\tau k\cA}-\cT(\tau)^{k}\right) & =\sum_{j=0}^{k-1}\e^{-\tau(k-1-j)\cA}\cA\left(\e^{-\tau\cA}-\cT(\tau)\right)\cT(\tau)^{j}=\\
 & =\sum_{j=0}^{k-2}\e^{-\tau(k-1-j)\cA}\cA\left(\e^{-\tau\cA}-\cT(\tau)\right)\cT(\tau)^{j}+\cA\left(\e^{-\tau\cA}-\cT(\tau)\right)\cT(\tau)^{k-1}\,.
\end{align*}
Since
\begin{align*}
\cT(\tau)-\e^{-\tau\cA} & =\begin{pmatrix}\cdot & \oX_{1}(\tau)\\
\oX_{2}\oE_{1}(\tau) & \oX_{2}\oX_{1}(\tau)
\end{pmatrix},
\end{align*}
we get that there is a constant $C=C(\|\oB_{1}\|,\|\oB_{2}\|)>0$
such that $\|\cT(\tau)-\e^{-\tau\cA}\|\leq C\tau$. Moreover, we see
that 
\[
\cA\left(\e^{-\tau\cA}-\cT(\tau)\right)=\begin{pmatrix}\cdot & \oA_{1}\oX_{1}(\tau)\\
\oA_{2}\oX_{2}\oE_{1}(\tau) & \oA_{2}\oX_{2}\oX_{1}(\tau)
\end{pmatrix},
\]
which is (by Lemma \ref{lem:Properties=00005CcT}) a bounded operator
for $\tau>0$ with $\|\cA\left(\cT(\tau)-\e^{-\tau\cA}\right)\|\leq C\tau$,
$C=C(\|\oB_{1}\|,\|\oB_{2}\|)>0$. Hence, we have that
\begin{align*}
\|\cA & \left(\e^{-\tau k\cA}-\cT(\tau)^{k}\right)\|\\
 & \leq\sum_{j=0}^{k-2}\|\e^{-\tau(k-1-j)\cA}\cA\|\cdot\|\e^{-\tau\cA}-\cT(\tau)\|\cdot\|\cT(\tau)^{j}\|+\|\cA\left(\e^{-\tau\cA}-\cT(\tau)\right)\|\cdot\|\cT(\tau)^{k-1}\|\\
 & \leq C\e^{t\left(\|\oB_{1}\|+\|\oB_{2}\|\right)}\left(\sum_{j=0}^{k-2}\frac{1}{\left(k-1-j\right)\tau}\tau+1\right)\leq C\e^{t\left(\|\oB_{1}\|+\|\oB_{2}\|\right)}\left(\log k+1\right)\,,
\end{align*}
where we have used $\sum_{j=1}^{k-1}\tfrac{1}{j}\leq\log k$. This
proves the claim.
\end{proof}
For the next two lemmas, we do not assume that the semigroups $\e^{-t\oA_{j}}$
are holomorphic. We recall that if $\cA$ and $\cC$ are boundedly
invertible, then the operators $\cA^{-1}\cC$, $\cC\cA^{-1}$ and
their inverses are all bounded.
\begin{lem}
\label{lem:AuxLemmaO(tau)Estimate}Let$-\oA_{j}:\dom\left(\oA_{j}\right)\subset X_{j}\to X_{j}$
be generators of contraction semigroups, and let $\oB_{1}:X_{2}\to X_{1}$,
$\oB_{2}:X_{1}\to X_{2}$ be bounded. Moreover let $\cA$ and $\cC$
be boundedly invertible. Then, there is a constant $C_{2}=C_{2}(\|\cA^{-1}\|,\|\cA^{-1}\cB\|)>0$
such that for all $\tau>0$ we have
\[
\|\cA^{-1}\left(\cT(\tau)-\e^{-\tau\cC}\right)\|\leq C_{2}\left(1+\e^{\tau\|\cB\|}\right)\tau.
\]
\end{lem}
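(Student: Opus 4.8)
The plan is to exploit that both $\cT(\tau)$ and $\e^{-\tau\cC}$ equal $\cI$ at $\tau=0$, and to split
\[
\cA^{-1}\left(\cT(\tau)-\e^{-\tau\cC}\right)=\cA^{-1}\left(\cT(\tau)-\cI\right)-\cA^{-1}\left(\e^{-\tau\cC}-\cI\right),
\]
estimating the two contributions separately. The exponential factor $\e^{\tau\|\cB\|}$ will come entirely from the semigroup piece, whereas the $\cT$-piece will be controlled by a constant times $\tau$, so that the sum is of the asserted form $C_2(1+\e^{\tau\|\cB\|})\tau$.

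For the semigroup piece I would use the fundamental theorem of calculus for $\e^{-s\cC}$ together with bounded invertibility. Since $\cC$ is closed and boundedly invertible, $\cC^{-1}(\e^{-\tau\cC}-\cI)=-\int_0^\tau\e^{-s\cC}\d s$, with range contained in $\dom(\cC)=\dom(\cA)$. On this domain $\cA^{-1}\cC$ coincides with the bounded operator $\cI-\cA^{-1}\cB$ (for $x\in\dom(\cA)$ one has $\cC x=\cA x-\cB x$, hence $\cA^{-1}\cC x=x-\cA^{-1}\cB x$). Thus $\cA^{-1}(\e^{-\tau\cC}-\cI)=-(\cI-\cA^{-1}\cB)\int_0^\tau\e^{-s\cC}\d s$, and using $\|\e^{-s\cC}\|\le\e^{s\|\cB\|}$ with $\int_0^\tau\e^{s\|\cB\|}\d s\le\tau\,\e^{\tau\|\cB\|}$ gives the bound $(1+\|\cA^{-1}\cB\|)\,\tau\,\e^{\tau\|\cB\|}$.

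For the $\cT$-piece I would compute $\cA^{-1}(\cT(\tau)-\cI)$ entrywise, since $\cA^{-1}=\mathrm{diag}(\oA_1^{-1},\oA_2^{-1})$ is diagonal. Writing $\oF_\tau^{(j)}=\int_0^\tau\e^{-\sigma\oA_j}\d\sigma$ so that $\oX_j=\oF_\tau^{(j)}\oB_j$, Lemma~\ref{lem:PropertiesSemigroups}(3) gives $\oA_j^{-1}(\oE_j(\tau)-\oI)=-\oF_\tau^{(j)}$, and $\oA_j^{-1}$ commutes with $\oF_\tau^{(j)}$. Each resulting entry then carries a factor $\oF_\tau^{(j)}$ with $\|\oF_\tau^{(j)}\|\le\tau$: the diagonal entries are $-\oF_\tau^{(1)}$ and $\oF_\tau^{(2)}\oA_2^{-1}\oB_2\oX_1-\oF_\tau^{(2)}$, while the off-diagonal entries are $\oF_\tau^{(1)}\oA_1^{-1}\oB_1$ and $\oF_\tau^{(2)}\oA_2^{-1}\oB_2\oE_1(\tau)$. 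Since $\oA_1^{-1}\oB_1$ and $\oA_2^{-1}\oB_2$ are the blocks of $\cA^{-1}\cB$, all entries except the quadratic cross term are immediately $O(\tau)$ with constant depending on $\|\cA^{-1}\cB\|$. For the quadratic term $\oF_\tau^{(2)}\oA_2^{-1}\oB_2\oX_1$ I would avoid a spurious $\tau^2$ by using the $\tau$-uniform bound $\|\oX_1(\tau)\|=\|(\oI-\e^{-\tau\oA_1})\oA_1^{-1}\oB_1\|\le 2\|\cA^{-1}\cB\|$, which keeps it linear in $\tau$; summing all entries then bounds the $\cT$-piece by $C\tau$.

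The main obstacle is not the size estimates, which are routine once the two pieces are separated, but the bookkeeping with unbounded operators: justifying that $\int_0^\tau\e^{-s\cC}\d s$ maps into $\dom(\cA)$ and that the formal identity $\cA^{-1}\cC=\cI-\cA^{-1}\cB$ may legitimately be used as a bounded operator on the relevant range. The remark preceding the lemma, that $\cA^{-1}\cC$, $\cC\cA^{-1}$ and their inverses are bounded once $\cA$ and $\cC$ are boundedly invertible, is exactly what underwrites this step; and the commutation of $\oA_j^{-1}$ with $\oF_\tau^{(j)}$ is what lets the diagonal $\cA^{-1}$ in the $\cT$-piece be absorbed into the coupling, producing the parameter $\|\cA^{-1}\cB\|$ rather than separate factors of $\|\cA^{-1}\|$ and $\|\oB_j\|$.
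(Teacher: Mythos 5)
Your proposal is correct and follows essentially the same route as the paper: the paper writes $\cT-\e^{-\tau\cC}=(\cT_2-\cI)\cT_1+(\cT_1-\cI)+(\cI-\e^{-\tau\cC})$, which after expansion is exactly your split into $\cA^{-1}(\cT(\tau)-\cI)$ (estimated entrywise via $\oA_j^{-1}(\oE_j(\tau)-\oI)=-\oF_\tau^{(j)}$ and $\|\oF_\tau^{(j)}\|\leq\tau$) and $\cA^{-1}(\cI-\e^{-\tau\cC})=\cA^{-1}\cC\,\cC^{-1}(\cI-\e^{-\tau\cC})$ with $\|\cA^{-1}\cC\|\leq 1+\|\cA^{-1}\cB\|$. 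The only cosmetic differences are that the paper performs the matrix bookkeeping in two stages rather than on the full matrix $\cT-\cI$ at once, and your explicit commutation of $\oA_j^{-1}$ past $\oF_\tau^{(j)}$ to express the constants through $\|\cA^{-1}\cB\|$ is a slightly cleaner accounting of the same estimate.
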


\begin{proof}
We have
\[
\cT(\tau)-\e^{-\tau\cC}=\cT_{2}(\tau)\cT_{1}(\tau)-\e^{-\tau\cC}=\left(\cT_{2}(\tau)-\cI\right)\cT_{1}(\tau)+\cT_{1}(\tau)-\cI+\cI-\e^{-\tau\cC},
\]
and
\begin{align*}
\cA^{-1}\left(\cT_{2}(\tau)-\cI\right)\cT_{1}(\tau) & =\begin{pmatrix}\cdot & \cdot\\
\oA_{2}^{-1}\oX_{2}(\tau)\oE_{1}(\tau) & \oA_{2}^{-1}\oX_{2}(\tau)\oX_{1}(\tau)+\oA_{2}^{-1}\left(\oE_{2}(\tau)-\oI\right)
\end{pmatrix}\\
\cA^{-1}\left(\cT_{1}(\tau)-\cI\right) & =\begin{pmatrix}\oA_{1}^{-1}\left(\oE_{1}(\tau)-\oI\right) & \oA_{1}^{-1}\oX_{1}(\tau)\\
\cdot & \cdot
\end{pmatrix}\\
\cA^{-1}\left(\cI-\e^{-\tau\cC}\right) & =\cA^{-1}\cC\cC^{-1}\left(\cI-\e^{-\tau\cC}\right).
\end{align*}
Since $\|\oA_{j}^{-1}\left(\oE_{j}(\tau)-\oI\right)\|\leq\tau$ (see
Lemma \ref{lem:PropertiesSemigroups}), we get that $\|\cA^{-1}\left(\cT_{2}(\tau)-\cI\right)\cT_{1}(\tau)\|\leq C\tau$
and $\|\cA^{-1}\left(\cT_{1}(\tau)-\cI\right)\|\leq C\tau$, where
$C=C(\|\cA^{-1}\|)$. Moreover, we have 
\[
\|\cA^{-1}\left(\cI-\e^{-\tau\cC}\right)\|\leq\|\cA^{-1}\left(\cA-\cB\right)\|\cdot\e^{\tau\|\cB\|}\tau\leq\left(1+\|\cA^{-1}\cB\|\right)\cdot\e^{\tau\|\cB\|}\tau.
\]
\end{proof}
\begin{lem}
\label{lem:AuxLemmaO(tau2)Estimate}Let $-\oA_{j}:\dom\left(\oA_{j}\right)\subset X_{j}\to X_{j}$
be generators of contraction semigroups, and let $\oB_{1}:X_{2}\to X_{1}$,
$\oB_{2}:X_{1}\to X_{2}$ be bounded. Moreover let $\cA$ and $\cC$
be boundedly invertible. Then, there is a constant $C_{3}=C_{3}(\|\oB_{2}\|,\|\oB_{1}\|,\|\oA_{1}^{-1}\|,\|\oA_{2}^{-1}\|)>0$
such that for all $\tau>0$ we have
\[
\|\left(\cT(\tau)-\e^{-\tau\cC}\right)\cA^{-1}\|\leq C_{3}\tau^{2}\e^{\tau\|\cB\|}\ .
\]
\end{lem}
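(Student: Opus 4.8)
The plan is to compare $\cT(\tau)$ with $\e^{-\tau\cC}$ entry-by-entry after expanding the latter to second order via the Duhamel (perturbation) series. Writing $\cC=\cA-\cB$ and iterating the variation-of-constants identity $\e^{-\tau\cC}=\e^{-\tau\cA}+\int_0^\tau\e^{-(\tau-s)\cA}\cB\e^{-s\cC}\d s$ once, I would use
\[
\e^{-\tau\cC}=\e^{-\tau\cA}+I_1(\tau)+R(\tau),\quad I_1(\tau)=\int_0^\tau\e^{-(\tau-s)\cA}\cB\e^{-s\cA}\d s,\quad R(\tau)=\int_0^\tau\!\!\int_0^s\e^{-(\tau-s)\cA}\cB\e^{-(s-r)\cA}\cB\e^{-r\cC}\d r\,\d s.
\]
Since $\cA$ is diagonal and $\cB$ purely off-diagonal, $I_1(\tau)$ is again off-diagonal, with $(1,2)$-entry $\int_0^\tau\oE_1(\tau-s)\oB_1\oE_2(s)\d s$ and $(2,1)$-entry $\int_0^\tau\oE_2(\tau-s)\oB_2\oE_1(s)\d s$. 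On the other hand, from the proof of Lemma~\ref{lem:AuxLemmaO(tau)Estimate} one has the exact identity $\cT(\tau)-\e^{-\tau\cA}=\begin{pmatrix}\cdot&\oX_1(\tau)\\\oX_2(\tau)\oE_1(\tau)&\oX_2(\tau)\oX_1(\tau)\end{pmatrix}$. Thus $\cT(\tau)-\e^{-\tau\cC}=\big(\cT(\tau)-\e^{-\tau\cA}-I_1(\tau)\big)-R(\tau)$, and it remains to estimate both summands after right multiplication by $\cA^{-1}=\mathrm{diag}(\oA_1^{-1},\oA_2^{-1})$.

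The heart of the matter is the off-diagonal cancellation. After substituting $\sigma=\tau-s$ in the entries of $I_1$ and using $\oX_1(\tau)=\int_0^\tau\oE_1(\sigma)\d\sigma\,\oB_1$, the $(1,2)$-entry of $\cT(\tau)-\e^{-\tau\cA}-I_1(\tau)$ becomes $\int_0^\tau\oE_1(\sigma)\oB_1\big[\oI-\oE_2(\tau-\sigma)\big]\d\sigma$, and the $(2,1)$-entry is $\int_0^\tau\oE_2(\sigma)\oB_2\big[\oE_1(\tau)-\oE_1(\tau-\sigma)\big]\d\sigma$, where $\oE_1(\tau)-\oE_1(\tau-\sigma)=\oE_1(\tau-\sigma)\big[\oE_1(\sigma)-\oI\big]$ by the semigroup law. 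The crucial point is Lemma~\ref{lem:PropertiesSemigroups}(3): since each $\oA_j$ is boundedly invertible, $\big[\oI-\oE_j(\rho)\big]\oA_j^{-1}=\oA_j^{-1}\big[\oI-\oE_j(\rho)\big]=\oF^{(j)}_\rho$ with $\oF^{(j)}_\rho:=\int_0^\rho\e^{-\sigma\oA_j}\d\sigma$ of norm $\le\rho$. Hence right multiplication by $\cA^{-1}$ turns the two off-diagonal entries into $\int_0^\tau\oE_1(\sigma)\oB_1\,\oF^{(2)}_{\tau-\sigma}\d\sigma$ and $-\int_0^\tau\oE_2(\sigma)\oB_2\oE_1(\tau-\sigma)\,\oF^{(1)}_\sigma\d\sigma$, each bounded in norm by $\tfrac12\|\oB_j\|\tau^2$ thanks to contractivity. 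The $(2,2)$-entry $\oX_2(\tau)\oX_1(\tau)\oA_2^{-1}$ is manifestly $O(\tau^2)$ by Lemma~\ref{lem:Properties=00005CcT}(2), while the $(1,1)$-entry vanishes. Using the block-norm estimate from the product-space setting, this gives $\|\big(\cT(\tau)-\e^{-\tau\cA}-I_1(\tau)\big)\cA^{-1}\|\le C\tau^2$ with $C$ depending only on $\|\oB_1\|,\|\oB_2\|,\|\oA_2^{-1}\|$.

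For the remainder no cancellation is needed: estimating the double integral directly with the contraction bounds $\|\e^{-s\cA}\|\le1$ and the perturbation bound $\|\e^{-r\cC}\|\le\e^{r\|\cB\|}$ yields $\|R(\tau)\|\le\|\cB\|^2\int_0^\tau\!\int_0^s\e^{r\|\cB\|}\d r\,\d s\le\tfrac12\|\cB\|^2\tau^2\e^{\tau\|\cB\|}$, so $\|R(\tau)\cA^{-1}\|\le\tfrac12\|\cB\|^2\|\cA^{-1}\|\,\tau^2\e^{\tau\|\cB\|}$. Adding the two contributions and bounding $1\le\e^{\tau\|\cB\|}$ gives the claim with $C_3=C_3(\|\oB_1\|,\|\oB_2\|,\|\oA_1^{-1}\|,\|\oA_2^{-1}\|)$; note that only the bounded invertibility of $\cA$ (equivalently of each $\oA_j$) is actually used. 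I expect the main obstacle to be the bookkeeping of the first-order off-diagonal terms: one must verify that they cancel exactly against $I_1(\tau)$, leaving precisely the differences $\oI-\oE_j$ that Lemma~\ref{lem:PropertiesSemigroups}(3) converts into an extra factor of order $\tau$. Getting the substitutions $\sigma\leftrightarrow\tau-s$ and the placement of $\oA_j^{-1}$ correct is where the care lies, and it is essential that the regularizing $\oA_j^{-1}$ sits on the same side as these $\oE_j$-differences.
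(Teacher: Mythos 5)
Your proof is correct, but it takes a genuinely different route from the one in the paper. The paper's proof first splits off the product term via $\cT_2\cT_1-\e^{-\tau\cC}=(\cI-\cT_2)(\cI-\cT_1)+\widehat{\cT}-\e^{-\tau\cC}$ with the symmetrized operator $\widehat{\cT}=\cT_1+\cT_2-\cI$, handles $(\cI-\cT_2)(\cI-\cT_1)\cA^{-1}$ directly, and then writes $\widehat{\cT}(\tau)-\e^{-\tau\cC}=\int_0^\tau\{\widehat{\cT}'(\sigma)+\widehat{\cT}(\sigma)\cC\}\e^{-(\tau-\sigma)\cC}\,\d\sigma$; the key computation there is that $\widehat{\cT}'(\sigma)+\widehat{\cT}(\sigma)\cC$ collapses to $\begin{pmatrix}\oX_1(\sigma)&\cdot\\\cdot&\oX_2(\sigma)\end{pmatrix}\begin{pmatrix}-\oB_2&\oA_2\\\oA_1&-\oB_1\end{pmatrix}$, which becomes $O(\sigma)$ once $\cA^{-1}$ is attached, and the regularizer is moved past the semigroup by the commutation $\e^{-(\tau-\sigma)\cC}\cA^{-1}=\cA^{-1}\cdot\cA\cC^{-1}\cdot\e^{-(\tau-\sigma)\cC}\cdot\cC\cA^{-1}$. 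You instead expand $\e^{-\tau\cC}$ to second order by Duhamel and verify entrywise that the first-order off-diagonal terms cancel against $\cT(\tau)-\e^{-\tau\cA}$ up to factors $\oI-\oE_j(\rho)$, which Lemma \ref{lem:PropertiesSemigroups}(3) converts into operators $\oF_\rho$ of norm $O(\rho)$, while the second-order Duhamel remainder is estimated directly. Both arguments hinge on placing $\cA^{-1}$ on the side where the gain of a factor $\tau$ occurs, and both yield the same $O(\tau^2\e^{\tau\|\cB\|})$ bound with the stated constant dependence. The paper's symmetrization/derivative device pays off later, in Section \ref{subsec:Other-approximations}, where the same computation covers the symmetrized approximation $\widehat{\cT}$ at no extra cost; your Duhamel route is more computational but slightly leaner in hypotheses, since it never invokes the bounded invertibility of $\cC$ (only that of $\cA$), and it makes the first-order cancellation mechanism fully explicit.
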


\begin{proof}
For better readability we neglect the $\tau$-dependance for a moment.
We have the decomposition
\[
\cT_{2}\cT_{1}-\e^{-\tau\cC}=\left(\cI-\cT_{2}\right)\left(\cI-\cT_{1}\right)+\cT_{1}+\cT_{2}-\cI-\e^{-\tau\cC}.
\]
The first term has the form
\begin{align*}
\left(\cI-\cT_{2}\right)\left(\cI-\cT_{1}\right) & =\begin{pmatrix}\cdot & \cdot\\
\oX_{2} & \oI-\oE_{2}
\end{pmatrix}\begin{pmatrix}\oI-\oE_{1} & \oX_{1}\\
\cdot & \cdot
\end{pmatrix}=\begin{pmatrix}\cdot & \cdot\\
\oX_{2}\left(\oI-\oE_{1}\right) & \oX_{2}\oX_{1}
\end{pmatrix},
\end{align*}
which is already of order $O(\tau)$. Moreover, we have 
\[
\left(\cI-\cT_{2}\right)\left(\cI-\cT_{1}\right)\cA^{-1}=\begin{pmatrix}\cdot & \cdot\\
\oX_{2}\left(\oI-\oE_{1}\right)\oA_{1}^{-1} & \oX_{2}\oX_{1}\oA_{2}^{-1}
\end{pmatrix},
\]
which shows that there is a constant $C=C(\|\oB_{2}\|,\|\oB_{1}\|,\|\oA_{2}^{-1}\|)$
such that the estimate $\|\left(\cI-\cT_{2}(\tau)\right)\left(\cI-\cT_{1}(\tau)\right)\cA^{-1}\|\leq C\tau^{2}$
holds.

It suffices to estimate the remaining part. We have
\[
\cT_{1}(\tau)+\cT_{2}(\tau)-\cI=\begin{pmatrix}\oE_{1}(\tau) & \oX_{1}(\tau)\\
\oX_{2}(\tau) & \oE_{2}(\tau)
\end{pmatrix}=:\widehat{\cT}(\tau)\,,
\]
where we have introduced the symmetric version $\widehat{\cT}$ of
$\cT_{2}\cT_{1}$. To estimate the difference $\widehat{\cT}(\tau)-\e^{-\tau\cC}$,
we rely on the following form
\begin{align*}
\left(\widehat{\cT}(\tau)-\e^{-\tau\cC}\right) & =\int_{0}^{\tau}\frac{\d}{\d\sigma}\left\{ \widehat{\cT}(\sigma)\e^{-(\tau-\sigma)\cC}\right\} \d\sigma=\\
 & =\int_{0}^{\tau}\left\{ \widehat{\cT}'(\sigma)\e^{-(\tau-\sigma)\cC}+\widehat{\cT}(\sigma)\cC\e^{-(\tau-\sigma)\cC}\right\} \d\sigma=\int_{0}^{\tau}\left\{ \widehat{\cT}'(\sigma)+\widehat{\cT}(\sigma)\cC\right\} \e^{-(\tau-\sigma)\cC}\d\sigma,
\end{align*}
which holds on $\dom(\cA)$.

We compute (see Lemma \ref{lem:Properties=00005CcT})
\[
\frac{\d}{\d\sigma}\widehat{\cT}(\sigma)=\frac{\d}{\d\sigma}\begin{pmatrix}\oE_{1}(\sigma) & \oX_{1}(\sigma)\\
\oX_{2}(\sigma) & \oE_{2}(\sigma)
\end{pmatrix}=\begin{pmatrix}-\oA_{1}\oE_{1}(\sigma) & \oE_{1}(\sigma)\oB_{1}\\
\oE_{2}(\sigma)\oB_{2} & -\oA_{2}\oE_{2}(\sigma)
\end{pmatrix},
\]
which provides the explicit simple form
\begin{align*}
\widehat{\cT}'(\sigma)+\widehat{\cT}(\sigma)\cC & =\begin{pmatrix}-\oA_{1}\oE_{1}(\sigma) & \oE_{1}(\sigma)\oB_{1}\\
\oE_{2}(\sigma)\oB_{2} & -\oA_{2}\oE_{2}(\sigma)
\end{pmatrix}+\begin{pmatrix}\oE_{1}(\sigma) & \oX_{1}(\sigma)\\
\oX_{2}(\sigma) & \oE_{2}(\sigma)
\end{pmatrix}\begin{pmatrix}\oA_{1} & -\oB_{1}\\
-\oB_{2} & \oA_{2}
\end{pmatrix}\\
 & =\begin{pmatrix}-\oX_{1}(\sigma)\oB_{2} & \oX_{1}(\sigma)\oA_{2}\\
\oX_{2}(\sigma)\oA_{1} & -\oX_{2}(\sigma)\oB_{1}
\end{pmatrix}=\begin{pmatrix}\oX_{1}(\sigma) & \cdot\\
\cdot & \oX_{2}(\sigma)
\end{pmatrix}\begin{pmatrix}-\oB_{2} & \oA_{2}\\
\oA_{1} & -\oB_{1}
\end{pmatrix}.
\end{align*}
In particular, we have
\[
\left(\widehat{\cT}'(\sigma)+\widehat{\cT}(\sigma)\cC\right)\cA^{-1}=\begin{pmatrix}\oX_{1}(\sigma) & \cdot\\
\cdot & \oX_{2}(\sigma)
\end{pmatrix}\begin{pmatrix}-\oB_{2}\cA_{1}^{-1} & \oI\\
\oI & -\oB_{1}\cA_{2}^{-1}
\end{pmatrix}\,.
\]
Hence, we get that there is a constant $C=C(\|\oB_{1}\|,\|\oB_{2}\|,\|\cA^{-1}\|)$
such that 
\begin{align}
\|\left(\widehat{\cT}(\tau)-\e^{-\tau\cC}\right)\cA^{-1}\| & =\|\int_{0}^{\tau}\left\{ \widehat{\cT}'(\sigma)+\widehat{\cT}(\sigma)\cC\right\} \e^{-(\tau-\sigma)\cC}\cA^{-1}\d\sigma\|\nonumber \\
 & =\|\int_{0}^{\tau}\left\{ \widehat{\cT}'(\sigma)+\widehat{\cT}(\sigma)\cC\right\} \cA^{-1}\cA\cC^{-1}\e^{-(\tau-\sigma)\cC}\cC\cA^{-1}\d\sigma\|\nonumber \\
 & \leq\int_{0}^{\tau}\|\left\{ \widehat{\cT}'(\sigma)+\widehat{\cT}(\sigma)\cC\right\} \cA^{-1}\|\cdot\|\cA\cC^{-1}\|\cdot\|\e^{-(\tau-\sigma)\cC}\|\cdot\|\cC\cA^{-1}\|\d\sigma\nonumber \\
 & \leq C\int_{0}^{\tau}\sigma\e^{(\tau-\sigma)\|\cB\|}\d\sigma\leq\frac{C}{2}\tau^{2}\e^{\tau\|\cB\|},\label{eq:EstimateInLemmaO(tau2)}
\end{align}
which proves the claim.
\end{proof}

\subsection{Convergence result for holomorphic semigroups}

We are now able to state and prove the main theorem, which show convergence
in operator norm with convergence rate estimate of $O(\frac{\log n}{n})$.
\begin{thm}
\label{thm:MainConvergenceResultHolomorphicSemigroups}Let $-\oA_{j}:\dom\left(\oA_{j}\right)\subset X_{j}\to X_{j}$
be generators of holomorphic contraction semigroups, and let $\oB_{1}:X_{2}\to X_{1}$,
$\oB_{2}:X_{1}\to X_{2}$ be bounded. Then, there are constants $C,\eta>0$
such that for all $t\geq0$ and $n\in\N$, we have 
\[
\|\cT(t/n)^{n}-\e^{-t\cC}\|\leq\frac{C}{n}\e^{t\eta}\e^{4t\left(\|\oB_{1}\|+\|\oB_{2}\|\right)}\left(\log n+t^{2}\right).
\]
\end{thm}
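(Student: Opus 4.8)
The plan is to bound the telescoped difference
\[
\cT(\tau)^{n}-\e^{-t\cC}=\cT(\tau)^{n}-\left(\e^{-\tau\cC}\right)^{n}=\sum_{k=0}^{n-1}\e^{-\tau k\cC}\left(\cT(\tau)-\e^{-\tau\cC}\right)\cT(\tau)^{n-1-k},\qquad\tau=\tfrac{t}{n},
\]
by controlling each summand through a one-sided regularization: the quadratic gain from the difference (Lemma \ref{lem:AuxLemmaO(tau2)Estimate}) is paired with the holomorphic smoothing of the adjacent power $\cT(\tau)^{n-1-k}$ (Lemma \ref{lem:AuxLemmaHolomorphicSemigroups}). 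Since Lemmas \ref{lem:AuxLemmaO(tau)Estimate} and \ref{lem:AuxLemmaO(tau2)Estimate} require $\cA$ and $\cC$ to be boundedly invertible, I would first reduce to this case by the spectral shift $\oA_{j}\mapsto\oA_{j}+\eta\oI$ ($\eta>0$), which preserves holomorphicity and contractivity while making $\cA,\cC$ invertible.

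For the bulk indices $0\le k\le n-2$ (so that $m:=n-1-k\ge1$) I would insert $\cA^{-1}\cA$ and write the summand as
\[
\e^{-\tau k\cC}\left[\left(\cT(\tau)-\e^{-\tau\cC}\right)\cA^{-1}\right]\left[\cA\,\cT(\tau)^{m}\right],
\]
which is legitimate because $\cT(\tau)$ maps $X$ into $\dom(\cA)$ by Lemma \ref{lem:PropertiesSemigroups} and holomorphicity. The three factors are estimated by $\|\e^{-\tau k\cC}\|\le\e^{\tau k\|\cB\|}$ (Proposition \ref{prop:StabilityStrongConvergence}), by Lemma \ref{lem:AuxLemmaO(tau2)Estimate} giving $C_{3}\tau^{2}\e^{\tau\|\cB\|}$, and by Lemma \ref{lem:AuxLemmaHolomorphicSemigroups} giving $C_{1}\e^{t(\|\oB_{1}\|+\|\oB_{2}\|)}(1+\log m)+\tfrac{M_{A}}{m\tau}$. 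Summing over $m=1,\dots,n-1$, the contribution of $\tfrac{M_{A}}{m\tau}$ is $M_{A}\tau\sum_{m}\tfrac{1}{m}\le M_{A}\tau(1+\log n)=O\!\left(\tfrac{\log n}{n}\right)$, while the term carrying $\log m$ produces $\tau^{2}\sum_{m}(1+\log m)=O\!\left(\tfrac{t^{2}\log n}{n}\right)$. The residual powers of $t$ and the logarithmic excess are absorbed using $t^{p}\le C\e^{t\eta}$, which yields the asserted shape $\tfrac{C}{n}\e^{t\eta}(\log n+t^{2})$ with the exponentials in $\|\oB_{1}\|,\|\oB_{2}\|$ accumulating into the prefactor $\e^{4t(\|\oB_{1}\|+\|\oB_{2}\|)}$.

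The boundary index $k=n-1$, where $\cT(\tau)^{0}=\cI$ and no holomorphic power stands to the right to absorb $\cA$, is treated by smoothing from the opposite side: I would write $\e^{-\tau(n-1)\cC}\left(\cT(\tau)-\e^{-\tau\cC}\right)=\left[\e^{-\tau(n-1)\cC}\cA\right]\left[\cA^{-1}\left(\cT(\tau)-\e^{-\tau\cC}\right)\right]$ and combine $\|\cA^{-1}(\cT(\tau)-\e^{-\tau\cC})\|\le C_{2}(1+\e^{\tau\|\cB\|})\tau$ (Lemma \ref{lem:AuxLemmaO(tau)Estimate}) with $\|\e^{-s\cC}\cA\|\le\tfrac{M_{C}}{s}+\|\cB\|\e^{s\|\cB\|}$, the latter following from the holomorphicity of $\cC$ together with $\cC\e^{-s\cC}=\e^{-s\cC}\cC$ and $\cA=\cC+\cB$. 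Being a single summand, this contributes only $O(1/n)$ and does not affect the rate.

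The main obstacle is the one-sidedness forced by the non-commutative matrix structure: Lemma \ref{lem:AuxLemmaO(tau2)Estimate} supplies the decisive $O(\tau^{2})$ bound only with $\cA^{-1}$ placed on the right of the difference, so the liberated $\cA$ must be absorbed by a holomorphic power standing immediately to its right. This fixes the orientation of the telescoping and isolates the lone boundary term in which that power degenerates to $\cI$; there only the weaker $O(\tau)$ estimate of Lemma \ref{lem:AuxLemmaO(tau)Estimate} is at hand, and one must verify it still suffices. The secondary difficulty is to track the logarithm generated by the holomorphic smoothing of $\cA\,\cT(\tau)^{m}$ so that summation returns $O(\tfrac{\log n}{n})$ rather than a coarser rate, and to check that the remaining $t$-powers are genuinely absorbed by the exponential prefactors.
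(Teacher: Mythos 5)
Your proposal is correct and follows essentially the same route as the paper: the shift $\oA_j\mapsto\oA_j+\eta$ to make $\cA,\cC$ invertible, the telescoping with the difference regularized from the right by $\cA^{-1}$ (Lemma \ref{lem:AuxLemmaO(tau2)Estimate}) paired with $\cA\,\cT(\tau)^{m}$ (Lemma \ref{lem:AuxLemmaHolomorphicSemigroups}) for the bulk, and the single degenerate term handled by the left-sided $O(\tau)$ estimate of Lemma \ref{lem:AuxLemmaO(tau)Estimate} together with the holomorphic smoothing of $\e^{-s\cC}$. The only cosmetic differences are that the paper absorbs the boundary term's $\cA$ via $\e^{-s\widetilde\cC}\widetilde\cC\cdot\widetilde\cC^{-1}\widetilde\cA$ rather than your direct bound on $\|\e^{-s\cC}\cA\|$, and it carries the $t$-powers explicitly instead of invoking $t^{p}\le C\e^{t\eta}$.
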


\begin{proof}
It is clear that nothing has to be shown for $t=0$ or $n=1$. So
let $t>0$ and $n\geq2$, and let us introduce $\tau=t/n$. If $-\cA$
and $-\cC$ are not boundedly invertible then, by introducing a shift
$\eta>0$ they can be made invertible and the previous estimates remain
unchanged. Indeed, defining $\widetilde{\oA}_{j}=\oA_{j}+\eta$ and
$\widetilde{\cC}=\cC+\eta$ for $\eta>0$ such that $\widetilde{\oA_{j}},\widetilde{\cC}$
are invertible, we observe that $\widetilde{\oE}_{j}(\tau):=\e^{-\tau\widetilde{\oA}_{j}}=\e^{-\tau\eta}\oE_{j}(\tau)$.
Clearly, $-\widetilde{\cC}$ is a generator of a holomorphic semigroup
and we have
\begin{equation}
\|t\widetilde{\cC}\e^{-t\widetilde{\cC}}\|=\|t\left(\cC+\eta\right)\e^{-t\cC}\e^{-t\eta}\|\leq M_{\cC}+t\eta\e^{-t\eta}\|\e^{-t\cC}\|\leq M_{\cC}+\frac{1}{2}\e^{t\left(\|\oB_{1}\|+\|\oB_{2}\|\right)},\label{eq:EstimateShiftedHolomorphicSemigroup}
\end{equation}
where we have used that $x\e^{-x}\leq\tfrac{1}{2}$ for all $x\geq0$.

Moreover, we define $\widetilde{\oX}_{1}(\tau):=\e^{-\tau\eta}\oX_{1}(\tau).$
Then, we have for all $\tau\geq0$ that
\[
\e^{\tau\eta}\widetilde{\cT}(\tau):=\e^{\tau\eta}\begin{pmatrix}\widetilde{\oE}_{1}(\tau) & \widetilde{\oX}_{1}(\tau)\\
\oX_{2}(\tau)\widetilde{\oE}_{1}(\tau) & \oX_{2}(\tau)\widetilde{\oX}_{1}(\tau)+\widetilde{\oE}_{2}(\tau)
\end{pmatrix}=\cT(\tau).
\]
Then, for all $\tau\geq0$ we have that 
\[
\|\widetilde{\oX}_{1}(\tau)\|\leq\tau\|\oB_{1}\|,\,\|\widetilde{\oA}_{1}\widetilde{\oX}_{1}(\tau)\|\leq\|\oA_{1}\oX_{1}(\tau)\|+\eta\|\oX_{1}(\tau)\|,\,\|\widetilde{\oA}_{2}\oX_{2}(\tau)\|\leq\|\oA_{2}\oX_{2}(\tau)\|+\eta\|\oX_{2}(\tau)\|.
\]
In particular, Lemma \ref{lem:AuxLemmaHolomorphicSemigroups} and
Lemma \ref{lem:AuxLemmaO(tau)Estimate} can now be adapted to the
shifted situation. Moreover, we have $\widetilde{\oX}_{1}'(\tau)-\oX_{1}'(\tau)=-\eta\e^{-\tau\eta}\oX_{1}(\tau)$,
which shows that the estimate \eqref{eq:EstimateInLemmaO(tau2)} holds,
so Lemma \ref{lem:AuxLemmaO(tau2)Estimate} can also be adapted to
the shifted situation. We have
\begin{align*}
\e^{-t\widetilde{\cC}}-\widetilde{\cT}(t/n)^{n} & =\left(\e^{-\tau\widetilde{\cC}}\right)^{n}-\widetilde{\cT}(\tau)^{n}\\
 & =\sum_{k=0}^{n-1}\e^{-\tau(n-1-k)\widetilde{\cC}}\left(\e^{-\tau\widetilde{\cC}}-\widetilde{\cT}(\tau)\right)\widetilde{\cT}(\tau)^{k}\\
 & =\e^{-\tau(n-1)\widetilde{\cC}}\widetilde{\cC}\widetilde{\cC}^{-1}\widetilde{\cA}\widetilde{\cA}^{-1}\left(\e^{-\tau\widetilde{\cC}}-\widetilde{\cT}(\tau)\right)+\left(\e^{-\tau\widetilde{\cC}}-\widetilde{\cT}(\tau)\right)\widetilde{\cA}^{-1}\widetilde{\cA}\widetilde{\cT}(\tau)^{n-1}\\
 & \qquad+\sum_{k=1}^{n-2}\e^{-\tau(n-1-k)\widetilde{\cC}}\left(\e^{-\tau\widetilde{\cC}}-\widetilde{\cT}(\tau)\right)\widetilde{\cA}^{-1}\widetilde{\cA}\widetilde{\cT}(\tau)^{k},
\end{align*}
where we have used the product $\mathbb{\cS}^{k}-\mathbb{\cT}^{k}=\sum_{j=0}^{k-1}\mathbb{\cS}^{k-1-j}(\mathbb{\cS}-\cT)\mathbb{\cT}^{j}$.
Then, by Lemma \ref{lem:AuxLemmaHolomorphicSemigroups}, \ref{lem:AuxLemmaO(tau)Estimate}
and \ref{lem:AuxLemmaO(tau2)Estimate} , we get
\begin{align*}
\|\e^{-t\widetilde{\cC}} & -\widetilde{\cT}(t/n)^{n}\|\\
 & \leq\|\e^{-\tau(n-1)\widetilde{\cC}}\widetilde{\cC}\|\cdot\|\widetilde{\cC}^{-1}\widetilde{\cA}\|\cdot\|\widetilde{\cA}^{-1}\left(\e^{-\tau\widetilde{\cC}}-\widetilde{\cT}(\tau)\right)\|+\|\left(\e^{-\tau\widetilde{\cC}}-\widetilde{\cT}(\tau)\right)\widetilde{\cA}^{-1}\|\cdot\|\widetilde{\cA}\widetilde{\cT}(\tau)^{n-1}\|\\
 & \qquad+\sum_{k=1}^{n-2}\|\e^{-\tau(n-1-k)\widetilde{\cC}}\|\cdot\|\left(\e^{-\tau\widetilde{\cC}}-\widetilde{\cT}(\tau)\right)\widetilde{\cA}^{-1}\|\cdot\|\widetilde{\cA}\widetilde{\cT}(\tau)^{k}\|\\
 & \leq\frac{1}{\tau(n-1)}\left(M_{\cC}+\frac{1}{2}\e^{\tau(n-1)\|\cB\|}\right)\|\widetilde{\cC}^{-1}\widetilde{\cA}\|\cdot C_{2}\left(1+\e^{\tau\|\cB\|}\right)\tau+\\
 & \qquad+C_{3}\tau^{2}\left\{ C_{1}\e^{t\|\cB\|}(1+\log(n-1))+\frac{M_{A}}{(n-1)\tau}\right\} +\\
 & \qquad+\sum_{k=1}^{n-2}\e^{\tau(n-1-k)\|\cB\|}\e^{-t\eta}C_{3}\tau^{2}\left\{ C_{1}\e^{t\|\cB\|}(1+\log k)+\frac{M_{A}}{k\tau}\right\} \\
 & \leq\frac{1}{(n-1)}\left(M_{1}+M_{2}\e^{t\left(\|\oB_{1}\|+\|\oB_{2}\|\right)}\right)+C_{3}\frac{t}{n}\left\{ C_{1}t\e^{t\|\cB\|}+\frac{M_{A}}{(n-1)}\right\} +\\
 & \qquad+\e^{t\|\cB\|}\e^{-t\eta}C_{3}\frac{t}{n}\left\{ C_{1}\e^{t\|\cB\|}n(1+\log n)\frac{t}{n}+M_{A}\log n\right\} \\
 & \leq\frac{C}{n}\e^{2t\|\cB\|}\left(\log n+t^{2}\right),
\end{align*}
for a constant $C>0$, where we have used that 
\[
\left(1+\log(n-1)\right)\frac{1}{n}\leq1,\quad\sum_{k=1}^{n-2}\frac{1}{k}\leq\log n,\quad\sum_{k=1}^{n-2}\log k\leq n\log n.
\]
For the operators without the shift this means
\[
\|\cT(t/n)^{n}-\e^{-t\cC}\|\leq\|\left(\e^{t/n\eta}\widetilde{\cT}(\frac{t}{n})\right)^{n}-\e^{t\eta}\e^{-t\widetilde{\cC}}\|=\e^{t\eta}\|\e^{-t\widetilde{\cC}}-\widetilde{\cT}(t/n)^{n}\|\leq\frac{C}{n}\e^{t\eta}\e^{2t\|\cB\|}\left(\log n+t^{2}\right),
\]
which shows the claimed estimate.
\end{proof}

\subsection{Operator-norm convergence in weaker norm}

Interestingly, we get on the subspace $\dom\left(\cA\right)\subset X$
a similar convergence result as Theorem\ref{thm:MainConvergenceResultHolomorphicSemigroups}
without assuming that the semigroups $\e^{-t\oA_{j}}$ are holomorphic.
For this we assume that $\oA_{j}$ are boundedly invertible (as we
have seen in the proof of Theorem \ref{thm:MainConvergenceResultHolomorphicSemigroups}
we could otherwise introduce a shift). We define a new operator norm
for bounded operators $\cB:X\to X$, 
\[
\|\cB\|_{\cA}:=\sup_{f\in\dom\left(\cA\right):\|\cA f\|\leq1}\|\cB f\|=\sup_{g\in X:\|g\|\leq1}\|\cB\cA^{-1}g\|=\|\cB\cA^{-1}\|.
\]
If$-\oA_{j}$ is unbounded, a bound on $\|\cB\|_{\cA}$ does not provide
a bound on $\|\cB\|$ in general. The crucial observation is Lemma
\ref{lem:AuxLemmaO(tau2)Estimate}, which provides an bound $\|\cT(\tau)-\e^{-\tau\cC}\|_{\cA}=O(\tau^{2})$.
Note that Lemma \ref{lem:AuxLemmaO(tau2)Estimate} here is a better
estimate than the analogous results in \cite{NeStZa17CREASO,NeStZa18ONCTPF,NeStZa18RONCTPF,NeStZa19TPFLEEHS,NeStZa20CRETPA}\textcolor{blue}{{}
}because the spatial regularization is only needed once to obtain
an estimate of order $O(\tau)$. We refer also to \cite{JahLub00EBEOS,HanOst09ESUO}
for comparable results related to the Trotter-product formula.
\begin{thm}
\label{thm:ConvergenceResultWeakOperatorNorm}Let $-\oA_{j}:\dom\left(\oA_{j}\right)\subset X_{j}\to X_{j}$
be generators of contraction semigroups, and let $\oB_{1}:X_{2}\to X_{1}$,
$\oB_{2}:X_{1}\to X_{2}$ be bounded. Moreover let $\cA$ and $\cC$
be boundedly invertible. Then, there is a constant $C=C(\|\oB_{2}\|,\|\oB_{1}\|,\|\oA_{1}^{-1}\|,\|\oA_{2}^{-1}\|)>0$
such that for all $t>0$ and $n\geq1,$we have
\[
\|\cT(t/n)^{n}-\e^{-t\cC}\|_{\cA}\leq\frac{C}{n}t^{2}\e^{2t(\|\oB_{1}\|+\|\oB_{2}\|)}\ .
\]
\end{thm}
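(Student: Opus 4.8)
The plan is to mirror the bounded-operator computation carried out earlier, but to place the single available regularisation $\cA^{-1}$ directly next to the one-step error and to measure everything in the norm $\|\cdot\|_{\cA}=\|\cdot\,\cA^{-1}\|$. Writing $\tau=t/n$ and using the telescopic identity
\[
\cT(\tau)^n-\e^{-n\tau\cC}=\sum_{k=0}^{n-1}\cT(\tau)^{n-1-k}\bigl(\cT(\tau)-\e^{-\tau\cC}\bigr)\e^{-k\tau\cC},
\]
I would multiply on the right by $\cA^{-1}$ and insert $\cA^{-1}\cA=\cI$ immediately after the one-step difference, so that each summand becomes
\[
\cT(\tau)^{n-1-k}\cdot\bigl[(\cT(\tau)-\e^{-\tau\cC})\cA^{-1}\bigr]\cdot\bigl[\cA\e^{-k\tau\cC}\cA^{-1}\bigr].
\]
This rearrangement is the decisive step: it exposes exactly the quantity controlled by Lemma~\ref{lem:AuxLemmaO(tau2)Estimate}, namely the $O(\tau^2)$ bound $\|(\cT(\tau)-\e^{-\tau\cC})\cA^{-1}\|\le C_3\tau^2\e^{\tau\|\cB\|}$, which is precisely why holomorphicity is not needed here.

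It then remains to bound the two flanking factors uniformly in $k$. The left factor is handled by stability: the estimates $\|\cT_j(\tau)\|\le 1+\tau\|\oB_j\|$ from the proof of Proposition~\ref{prop:StabilityStrongConvergence} give $\|\cT(\tau)^{n-1-k}\|\le\e^{(n-1-k)\tau(\|\oB_1\|+\|\oB_2\|)}$. The right factor $\cA\e^{-k\tau\cC}\cA^{-1}$ is a conjugated semigroup, and bounding it is the main obstacle. Its generator $\cA\cC\cA^{-1}=\cA-\cA\cB\cA^{-1}$ contains the piece $\cA\cB\cA^{-1}$, which is \emph{unbounded} (since $\cB\cA^{-1}g$ need not lie in $\dom(\cA)$), so nothing is gained by passing to the generator; and, in contrast to Theorem~\ref{thm:MainConvergenceResultHolomorphicSemigroups}, there is no holomorphic smoothing available to absorb it.

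The resolution is purely algebraic and avoids the generator altogether. Since $\cA=\cC+\cB$ and $\e^{-s\cC}$ maps $\dom(\cA)=\dom(\cC)$ into itself, one has on $\dom(\cA)$
\[
\cA\e^{-s\cC}\cA^{-1}=\cC\e^{-s\cC}\cA^{-1}+\cB\e^{-s\cC}\cA^{-1}=\e^{-s\cC}\,\cC\cA^{-1}+\cB\e^{-s\cC}\cA^{-1},
\]
where $\cC\cA^{-1}=\cI-\cB\cA^{-1}$ is bounded; hence this extends to all of $X$ with $\|\cA\e^{-s\cC}\cA^{-1}\|\le\e^{s\|\cB\|}\bigl(\|\cC\cA^{-1}\|+\|\cB\|\,\|\cA^{-1}\|\bigr)$, which is uniformly bounded for $s=k\tau\le t$. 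That the conjugated semigroup is bounded even though its generator is not is the key observation behind the whole theorem.

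Substituting the three estimates, the exponential prefactors combine as
\[
\e^{(n-1-k)\tau(\|\oB_1\|+\|\oB_2\|)}\,\e^{\tau\|\cB\|}\,\e^{k\tau\|\cB\|}\le\e^{t(\|\oB_1\|+\|\oB_2\|)},
\]
using $\|\cB\|\le\|\oB_1\|+\|\oB_2\|$ and $n\tau=t$. Each of the $n$ summands then carries the factor $\tau^2=t^2/n^2$, so that
\[
\|(\cT(t/n)^n-\e^{-t\cC})\cA^{-1}\|\le n\cdot C\,\tau^2\,\e^{t(\|\oB_1\|+\|\oB_2\|)}=\tfrac{C}{n}\,t^2\,\e^{t(\|\oB_1\|+\|\oB_2\|)},
\]
which is even sharper than, and in particular implies, the claimed bound with $\e^{2t(\|\oB_1\|+\|\oB_2\|)}$. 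The bounded invertibility of $\cA$ and $\cC$ required in the hypothesis is exactly what makes the conjugation and the quantities $\cC\cA^{-1},\,\cB\cA^{-1}$ meaningful, so no preliminary shift is needed.
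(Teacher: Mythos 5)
Your proposal is correct and follows essentially the same route as the paper: telescoping, placing the single regularisation $\cA^{-1}$ next to the one-step error so that Lemma \ref{lem:AuxLemmaO(tau2)Estimate} applies, and controlling the flanking factors by stability and by conjugating the semigroup $\e^{-k\tau\cC}$ with $\cA$. The only cosmetic difference is that the paper bounds this conjugated factor by writing $\e^{-k\tau\cC}\cA^{-1}=\cA^{-1}\left(\cA\cC^{-1}\right)\e^{-k\tau\cC}\left(\cC\cA^{-1}\right)$, whereas you use $\cA=\cC+\cB$ and commute $\cC$ through the semigroup --- these bound the very same operator and both rest on the boundedness of $\cC\cA^{-1}$ and $\cA\cC^{-1}$ guaranteed by the hypothesis.
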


\begin{proof}
There is nothing to show for $t=0$ and $n=1$. So let $t>0$ and
$n\geq2$. Introducing, $\tau=\frac{t}{n}$ and using the product
$\mathbb{\cS}^{k}-\mathbb{\cT}^{k}=\sum_{j=0}^{k-1}\mathbb{\cS}^{k-1-j}(\mathbb{\cS}-\cT)\mathbb{\cT}^{j}$,
we have
\begin{align*}
\left(\cT(\tau)^{n}-\e^{-t\cC}\right)\cA^{-1} & =\left(\cT(\tau)^{n}-\left(\e^{-\tau\cC}\right)^{n}\right)\cA^{-1}\\
 & =\sum_{k=0}^{n-1}\cT(\tau)^{n-k-1}\left(\cT(\tau)-\e^{-\tau\cC}\right)\e^{-\tau k\cC}\cA^{-1}\\
 & =\cT(\tau)^{n-1}\left(\cT(\tau)-\e^{-\tau\cC}\right)\cA^{-1}+\left(\cT(\tau)-\e^{-\tau\cC}\right)\cA^{-1}\cA\cC^{-1}\e^{-\tau(n-1)\cC}\cC\cA^{-1}\\
 & \qquad+\sum_{k=1}^{n-2}\cT(\tau)^{n-k-1}\left(\cT(\tau)-\e^{-\tau\cC}\right)\cA^{-1}\cA\cC^{-1}\e^{-\tau k\cC}\cC\cA^{-1}.
\end{align*}
By Lemma \ref{lem:AuxLemmaO(tau2)Estimate}, the first and the second
term can be estimated by $C\e^{t(\|\oB_{1}\|+\|\oB_{2}\|)}\tau^{2}$.
For the sum in the last term, we have the bound $Cn\e^{t(\|\oB_{1}\|+\|\oB_{2}\|)}\tau^{2}$.
Hence, we conclude
\[
\|\left(\cT(\tau)^{n}-\e^{-t\cC}\right)\cA^{-1}\|\leq\frac{C}{n}t^{2}\e^{t(\|\oB_{1}\|+\|\oB_{2}\|)}\,.
\]
\end{proof}

\subsection{Other similar approximations\label{subsec:Other-approximations}}

Analogue convergence results as \ref{thm:MainConvergenceResultHolomorphicSemigroups}
and \ref{thm:ConvergenceResultWeakOperatorNorm} can also be shown
for other approximations.

\subsubsection{Transposed approximation}

Instead on applying first $\cT_{1}$ and then $\cT_{2}$, one could
also consider the transposed approximation $\cT_{\mathrm{T}}(\tau):=\cT_{1}(\tau)\cT_{2}(\tau)$.
Since, we have

\[
\cT_{\mathrm{T}}-\cT=\cT_{1}\cT_{2}-\cT_{2}\cT_{1}=\begin{pmatrix}\oX_{1}\oX_{2} & \oX_{1}(\oE_{2}-\oI)\\
\oX_{2}(\oI-\oE_{1}) & \oX_{2}\oX_{1}
\end{pmatrix},
\]
we immediately obtain that $\left(\cT_{\mathrm{T}}(\tau)-\cT(\tau)\right)\cA^{-1}=O(\tau^{2})$,
which provides an analogue of Lemma \ref{lem:AuxLemmaO(tau)Estimate}.
Hence, we get for $\cT_{\mathrm{T}}(t/n)^{n}-\e^{-t\cC}$ the same
operator-norm convergence result as in Theorem \ref{thm:MainConvergenceResultHolomorphicSemigroups}.

\subsubsection{Symmetrized approximation}

In the proof of Lemma \ref{lem:AuxLemmaO(tau)Estimate}, we had already
used the symmetric approximation $\widehat{\cT}$, which has the simple
form
\[
\widehat{\cT}(t)=\begin{pmatrix}\oE_{1}(t) & \oX_{1}(t)\\
\oX_{2}(t) & \oE_{2}(t)
\end{pmatrix}=\e^{-t\cA}+\begin{pmatrix}\cdot & \int_{0}^{t}\e^{-s\oA_{1}}\d s\\
\int_{0}^{t}\e^{-s\oA_{2}}\d s & \cdot
\end{pmatrix}\circ\cB\,.
\]
In Lemma \ref{lem:AuxLemmaO(tau)Estimate} it is shown that we have
an analogue result of the form $\left(\widehat{\cT}(\tau)-\e^{-t\cC}\right)\cA^{-1}=O(\tau^{2})$
holds. Hence, we get for $\widehat{\cT}(t/n)^{n}-\e^{-t\cC}$ the
same operator-norm convergence result as in Theorem \ref{thm:MainConvergenceResultHolomorphicSemigroups}.

\subsubsection{Naive solution of the integral}

In the convergence result in the strong topology Proposition (\ref{prop:StabilityStrongConvergence}),
we have already discussed the approximation, where the integral in
the coupling term is naively solved and $\oX_{j}(\tau)$ is replaced
by $\tau\oB_{j}$, which leads to $\cT_{\mathrm{B}}=\begin{pmatrix}\oE_{1} & \tau\oB_{1}\\
\tau\oB_{2} & \oE_{2}
\end{pmatrix}$. However, an analogue convergence result like $\left(\cT_{\mathrm{B}}(\tau)-\e^{-\tau\cC}\right)\cA^{-1}=O(\tau)$
is not clear.

\section{Remarks on unbounded coupling $\cB$\label{sec:UnboundedCoupling}}

In this section, we briefly comment on the situation where the coupling
between the spaces $X_{1}$ and $X_{2}$ is given by an unbounded
linear operator $\cB=\begin{pmatrix} & \oB_{1}\\
\oB_{2}
\end{pmatrix}$, $\oB_{1}:\dom(\oB_{1})\subset X_{2}\to X_{1}$ and $\oB_{2}:\dom(\oB_{2})\subset X_{1}\to X_{2}$.
Throughout the section, we assume that $-\oA_{j}$ are boundedly invertible
generators of holomorphic contraction semigroups.

\subsection{Existence of solution operators for the perturbed system and the
inhomogeneous abstract Cauchy problem}

For generators of holomorphic semigroups $-\oA_{j}$ it is possible
to define fractional powers $\oA_{j}^{\alpha}$, $\alpha\in[0,1]$
interpolating between $\oA_{j}$ and $\oI$. In previous similar works
(see e.g. \cite{CacZag01ONCTPFHS,NeStZa20CRETPA}), it is assumed
that there is an $\alpha\in[0,1[$ such that $\dom(\cA^{\alpha})\subset\dom(\cB)$
and that $\cB\cA^{-\alpha}:X\to X$ is bounded, or equivalently that
$\oB_{1}\oA_{2}^{-\alpha}:X_{2}\to X_{1}$ and that $\oB_{2}\oA_{1}^{-\alpha}:X_{1}\to X_{2}$
are bounded. Then, $\cB$ is $\cA$-bounded with relative bound zero,
and hence, the sum $-\cA+\cB$ is a generator of a holomorphic semigroup,
by classical perturbation results \cite[Theorem III.2.10]{EngNag00OPSLEE}.

Moreover, we are going to assume that there is a $\beta\in[0,1[$
such that $\oA_{i}^{-\beta}\oB_{i}$ is bounded, or equivalently,
that $\cA^{-\beta}\cB$ is bounded. Then for all $\tau\geq0$ the
split-step approximation operators $\cT_{i}(\tau):X\to X$ defined
by \eqref{eq:Definition=00005CcT} are bounded. Indeed, we have for
all $y\in X_{2}$ that
\begin{align*}
\|\int_{0}^{\tau}\d\sigma\e^{-\sigma\oA_{1}}\oB_{1}y\| & =\|\int_{0}^{\tau}\d\sigma\e^{-\sigma\oA_{1}}\oA_{1}^{\beta}\oA_{1}^{-\beta}\oB_{1}y\|\leq\int_{0}^{\tau}\|\e^{-\sigma\oA_{1}}\oA_{1}^{\beta}\|\d\sigma\cdot\|\oA_{1}^{-\beta}\oB_{1}y\|\\
 & \leq C_{\beta}\int_{0}^{\tau}\sigma^{-\beta}\d\sigma\cdot\|\oA_{1}^{-\beta}\oB_{1}\|\cdot\|y\|\leq C_{\beta}\tau^{1-\beta}\|\oA_{1}^{-\beta}\oB_{1}\|\cdot\|y\|,
\end{align*}
where we have used, that for generators $-\oA$ of bounded holomorphic
semigroups, there is a constant $C_{\beta}>0$ such that for all $t>0$
we have the estimate
\[
\|\oA^{\beta}\e^{-t\oA}\|\leq\frac{C_{\beta}}{t^{\beta}}\,.
\]
Hence, we get that $\cT_{1}(\tau):X\to X$ is bounded. Similarly we
get that also $\cT_{2}(\tau):X\to X$ is bounded, and thus defining
a bounded time-discretization $\cT(\tau)=\cT_{2}(\tau)\cT_{1}(\tau)$,
which satisfies $\cT(0)=\cI$.

\subsection{Stability of the approximation family}

To ensure that $\left\{ \cT(\tau)\right\} _{\tau\geq0}$ is a reasonable
approximation family, we have to show that $\cT$ is stable, i.e.
the family $\cT(t/n)^{n}$ is uniformly bounded. We shortly discussed
why stability is delicate and in general cannot expected under the
assumptions here.

We have seen that the coupling terms in $\cT_{j}$ are bounded by
$O(\tau^{1-\beta})$. So we get (neglecting bounded operators)
\begin{align*}
\cT(\tau)=\cT_{2}(\tau)\cT_{1}(\tau) & =\begin{pmatrix}\oI & \cdot\\
\int_{0}^{\tau}\d\sigma\e^{-\sigma\oA_{2}}\oB_{2} & \e^{-\tau\oA_{2}}
\end{pmatrix}\begin{pmatrix}\e^{-\tau\oA_{1}} & \int_{0}^{\tau}\d\sigma\e^{-\sigma\oA_{1}}\oB_{1}\\
\cdot & \oI
\end{pmatrix}\\
 & \thickapprox\begin{pmatrix}\oI & \cdot\\
\tau^{1-\beta}\oI & \oI
\end{pmatrix}\begin{pmatrix}\oI & \tau^{1-\beta}\oI\\
\cdot & \oI
\end{pmatrix}=\begin{pmatrix}\oI & \tau^{1-\beta}\\
\tau^{1-\beta} & \left(\tau^{2-2\beta}+1\right)\oI
\end{pmatrix}.
\end{align*}
If $\beta=0$, then we have $\begin{pmatrix}1 & \tau\\
\tau & 1+\tau^{2}
\end{pmatrix}=\begin{pmatrix}1 & \cdot\\
\cdot & 1
\end{pmatrix}+\tau\begin{pmatrix}\cdot & 1\\
1 & \tau
\end{pmatrix},$which has bounded powers expressed by the matrix exponential.

However, we have that on $\R^{2}$ that the matrix powers of $\begin{pmatrix}1 & \tau^{1-\beta}\\
\tau^{1-\beta} & 1+\tau^{2-2\beta}
\end{pmatrix}=:\mathbf{P}(\tau^{1-\beta})$ for $\beta\in]0,1]$ are unbounded. Indeed, fixing $x=\tau^{1-\beta}$
we have that $v(x)=\left(\frac{1}{2}(-x+\sqrt{4+x^{2}}),1\right)^{\mathrm{T}}$
is an eigenvector of $\mathbf{P}(x)$, with $\mathbf{P}(x)v(x)=\left(1+\frac{x}{2}\left(x+\sqrt{4+x^{2}}\right)\right)v(x)$,
and $v(x)$ is uniformly bounded for $x=\tau^{1-\beta}$ as $\tau\to0$,
with $v(x)\to(1,1)^{\mathrm{T}}$. Hence, 
\begin{align*}
\|\cT(t/n)^{n}\| & \geq\frac{1}{\|v(x)\|}\|\begin{pmatrix}1 & x\\
x & 1+x^{2}
\end{pmatrix}^{n}v(x)\|=\frac{1}{\|v(x)\|}\|\left(1+\frac{x}{2}\left(x+\sqrt{4+x^{2}}\right)\right)^{n}v(x)\|.\\
 & \geq\left(1+\frac{x}{2}\left(x+\sqrt{4+x^{2}}\right)\right)^{n}\geq1+n\frac{x}{2}\left(x+\sqrt{4+x^{2}}\right)=nx=n^{\beta}t^{1-\beta},
\end{align*}
which tends to infinity as $n\to\infty$. This means, stability of
${\cal T}(\tau)$ is in general not clear, and, hence, convergence
of $\cT(\tau)^{n}$ to $\e^{-t\cC}$ (even in the strong topology)
cannot be expected.

{\footnotesize{}\bibliographystyle{/Users/astephan/Documents/Science/my_alpha}
\bibliography{/Users/astephan/Documents/Science/total-bib-file}
}{\footnotesize\par}
\end{document}